\let\reftagform@=\tagform@
\def\tagform@#1{\maketag@@@{(\ignorespaces\textcolor{blue}{#1}\unskip\@@italiccorr)}}
\renewcommand{\eqref}[1]{\textup{\reftagform@{\ref{#1}}}}
\newtheorem{theorem}{Theorem}
\theoremstyle{plain}
\newtheorem{corollary}{Corollary}
\newtheorem{lemma}{Lemma}
\newtheorem{remark}{Remark}
\numberwithin{equation}{section}
 \DeclareMathOperator{\spe}{sp}
\def\etal{et al.\,}
\begin{document}
  \title[Numerical radius inequalities  for Hilbert Space Operators ]
{ Numerical radius inequalities   for Hilbert Space Operators }
\author[M.W. Alomari]{Mohammad .W. Alomari}

\address{Department of Mathematics, Faculty of Science and Information
	Technology, Irbid National University, 2600 Irbid 21110, Jordan.}
\email{mwomath@gmail.com}

\date{\today}
\subjclass[2010]{Primary: 47A12, 47A30   Secondary: 15A60, 47A63.
}

\keywords{Numerical radius, Operator norm, mixed Schwarz
	inequality, H\"{o}lder-McCarty inequality}
\begin{abstract}
	In this work, an improvement of H\"{o}lder-McCarty inequality is
	established. Based on that, several refinements of the generalized
	mixed Schwarz inequality are obtained. Consequently,  some new
	numerical radius inequalities are  proved. New
	inequalities for numerical radius of $n\times n$ matrix of Hilbert
	space operators are proved as well. Some refinements of some earlier
	results were proved in literature are also given. Some of the presented
	results are refined and it shown to be better than earlier results
	were proved in literature.
\end{abstract}

\maketitle
%/=\=/=\=/=\=/=\=/=\=/=\=/=\=/=\=/=\=/=\=/=\=/=\=/=\=/=\=/=\=/=\=/=\=/=\=/=\=/=\=/=\=/=\=/=\=/=\=/=\=/=\=/=\=
%/=\=/=\=/=\=/=\=/=\=/=\=/=\=/=\=/=\=/=\=/=\=/=\=/=\=/=\=/=\=/=\=/=\=/=\=/=\=/=\=/=\=/=\=/=\=/=\=/=\=/=\=/=\=
\section{Introduction}
%/=\=/=\=/=\=/=\=/=\=/=\=/=\=/=\=/=\=/=\=/=\=/=\=/=\=/=\=/=\=/=\=/=\=/=\=/=\=/=\=/=\=/=\=/=\=/=\=/=\=/=\=/=\=
%/=\=/=\=/=\=/=\=/=\=/=\=/=\=/=\=/=\=/=\=/=\=/=\=/=\=/=\=/=\=/=\=/=\=/=\=/=\=/=\=/=\=/=\=/=\=/=\=/=\=/=\=/=\=
%=/=\=/=\=/=\/=\=/=\=/=\=/=\=/=\/=\=/=\=/=\=/=\=/=\/=\=/=\=/=\=/=\=/=\/=\=/=\=/=\=/=\=/=\/=\=/=\=/=\=/=\=/=\=

Let $\mathscr{B}\left( \mathscr{H}\right) $ be the Banach algebra
of all bounded linear operators defined on a complex Hilbert space
$\left( \mathscr{H};\left\langle \cdot ,\cdot \right\rangle
\right)$  with the identity operator  $1_\mathscr{H}$ in
$\mathscr{B}\left( \mathscr{H}\right) $. A bounded linear
operator $A$ defined on $\mathscr{H}$ is selfadjoint if and only
if $ \left\langle {Ax,x} \right\rangle \in \mathbb{R}$ for all
$x\in \mathscr{H}$. The spectrum of an operator $A$ is the set of all $\lambda \in \mathbb{C}$  for which the operator $\lambda I - A$ does not have a bounded linear operator inverse, and is denoted by
$\spe\left(A\right)$. Consider the real vector space
$\mathscr{B}\left( \mathscr{H}\right)_{sa}$ of self-adjoint
operators on $ \mathscr{H}$ and its positive cone
$\mathscr{B}\left( \mathscr{H}\right)^{+}$ of positive operators
on $\mathscr{H}$. Also, $\mathscr{B}\left(
\mathscr{H}\right)_{sa}^I$ denotes the convex set of bounded
self-adjoint operators on the Hilbert space $\mathscr{H}$ with
spectra in a real interval $I$. A
partial order is naturally equipped on $\mathscr{B}\left(
\mathscr{H}\right)_{sa}$ by defining $A\le B$ if and only if
$B-A\in   \mathscr{B}\left( \mathscr{H}\right)^{+}$.  We write $A
> 0$ to mean that $A$ is a strictly positive operator, or
equivalently, $A \ge 0$ and $A$ is invertible. When $\mathscr{H} =
\mathbb{C}^n$, we identify $\mathscr{B}\left( \mathscr{H}\right)$
with the algebra $\mathfrak{M}_{n\times n}$ of $n$-by-$n$ complex
matrices. Then, $\mathfrak{M}^{+}_{n\times n}$ is just the cone of
$n$-by-$n$ positive semidefinite matrices.

For a bounded linear operator $T$ on a Hilbert space
$\mathscr{H}$, the numerical range $W\left(T\right)$ is the image
of the unit sphere of $\mathscr{H}$ under the quadratic form $x\to
\left\langle {Tx,x} \right\rangle$ associated with the operator.
More precisely,
\begin{align*}
W\left( T \right) = \left\{ {\left\langle {Tx,x} \right\rangle :x
	\in \mathscr{H},\left\| x \right\| = 1} \right\}
\end{align*}
Also, the numerical radius is defined to be
\begin{align*}
w\left( T \right) = \sup \left\{ {\left| \lambda\right|:\lambda
	\in W\left( T \right) } \right\} = \mathop {\sup }\limits_{\left\|
	x \right\| = 1} \left| {\left\langle {Tx,x} \right\rangle }
\right|.
\end{align*}

The spectral radius of an operator $T$ is defined to be
\begin{align*}
r\left( T \right) = \sup \left\{ {\left| \lambda\right|:\lambda
	\in \spe\left( T \right) } \right\}
\end{align*}

We recall that,  the usual operator norm of an operator $T$ is
defined to be
\begin{align*}
\left\| T \right\| = \sup \left\{ {\left\| {Tx} \right\|:x \in
	H,\left\| x \right\| = 1} \right\}.
\end{align*}
and
\begin{align*}
\ell \left( T \right): &= \inf \left\{ {\left\| {Tx} \right\|:x
	\in \mathscr{H},\left\| x \right\| = 1} \right\}
\\
&=      \inf \left\{ {\left|\left\langle {Tx,y} \right\rangle
	\right|:x,y \in
	\mathscr{H},\left\| x \right\| =\left\| y \right\|= 1} \right\}.
\end{align*}

 It's well known that   the numerical radius is not submultiplicative, but it is satisfies  $w(TS)\le 4w\left(T\right) \left(S\right)$ for all $T,S\in \mathscr{B}\left(  \mathscr{H}\right)$. In particular if $T,S$ are commute,  then $w(TS)\le 2w\left(T\right) \left(S\right)$. Moreover, if $T,S$ are normal  then $w\left(\cdot\right)$ is submultiplicative $w(TS)\le w\left(T\right) \left(S\right)$. Denote  $|T|=\left(T^*T\right)^{1/2}$  the absolute value of the operator $T$. Then we have $w\left(|T|\right) = \|T\|$. It's convenient to mention  that, the numerical radius norm is weakly unitarily invariant; i.e., $w\left(U^*TU\right) = w\left(T\right)$  for all unitary $U$. Also, let us  not miss the chance to mention the important property that  $w\left(T\right) = w\left(T^*\right)$  and $w\left(T^*T\right) = w\left(TT^*\right)$ for every $T\in \mathscr{B}\left( \mathscr{H}\right)$.

A popular problem is the following: does the numerical radius of the product of operators commute, i.e., $w(TS)= w\left(ST\right)$  for any operators $T,S\in \mathscr{B}\left(\mathscr{H}\right)$?

This problem has been given serious attention by many authors and in several resources (see \cite{G}, for example). Fortunately, it has been shown recently that,  for one of such  operators must be a multiple of a unitary operator, and we need  only to check $w\left(TS\right)=w\left(ST\right)$ for all rank one operators $S\in \mathscr{B}\left( \mathscr{H}\right)$ to arrive at the conclusion. This fact was proved by Chien \etal  in \cite{CGLTW}. For other related problems involving  numerical ranges and radius see \cite{CGLTW} and \cite{CKN}
as well as the elegant work of Li \cite{LTWW} and the  references
therein. For more classical and recent properties of  numerical range  and radius, see \cite{CGLTW} \cite{CKN}, \cite{LTWW} and the comprehensive books \cite{B},  \cite{H1} and \cite{H2}.

On the other hand, it is well known that $w\left(\cdot\right)$
defines an operator norm on $\mathscr{B}\left( \mathscr{H}\right)
$ which is equivalent to operator norm $\|\cdot\|$. Moreover, we
have
\begin{align}
\frac{1}{2}\|T\|\le w\left(T\right) \le \|T\|\label{eq1.1}
\end{align}
for any $T\in \mathscr{B}\left( \mathscr{H}\right)$. The
inequality is sharp.

In 2003, Kittaneh \cite{FK1} refined the right-hand side of
\eqref{eq1.1}, where he proved that
\begin{align}
w\left(T\right) \le
\frac{1}{2}\left(\|T\|+\|T^2\|^{1/2}\right)\label{eq1.2}
\end{align}
for any  $T\in \mathscr{B}\left( \mathscr{H}\right)$.

After that in 2005, the same author in \cite{FK} proved that
\begin{align}
\frac{1}{4}\|A^*A+AA^*\|\le  w^2\left(T\right) \le
\frac{1}{2}\|A^*A+AA^*\|.\label{eq1.3}
\end{align}
The inequality is sharp. This inequality was also reformulated and generalized in \cite{EF} but in terms of Cartesian
decomposition.

In 2007, Yamazaki \cite{Y} improved both \eqref{eq1.1} and
\eqref{eq1.2} by proving that
\begin{align}
w\left( T \right) \le \frac{1}{2}\left( {\left\| T \right\| +
	w\left( {\widetilde{T}} \right)} \right) \le \frac{1}{2}\left(
{\left\| T \right\| + \left\| {T^2 } \right\|^{1/2} }
\right)\label{eq1.4}
\end{align}
where $\widetilde{T}=|T|^{1/2}U|T|^{1/2}$ with unitary $U$.

In 2008, Dragomir \cite{D4} used Buzano inequality to improve
\eqref{eq1.1}, as follows:
\begin{align}
w^2\left( T \right) \le \frac{1}{2}\left( {\left\| T \right\| +
	w\left( {T^2} \right)} \right) \label{eq1.5}
\end{align}
This result was also recently generalized by Sattari \etal in
\cite{SMY}.

This work, is divided into three sections, after this
introduction, Section \ref{sec2} is devoted to recall some facts
about superquadratic functions and the mixed Schwarz inequality.
In Section \ref{sec3}, we  refine the  Jesnen and
H\"{o}lder--McCarty  inequalities for positive operators which in
turn   allow us to refine the generalized mixed Schwarz inequality
with of its some consequences.   In Section \ref{sec4}, new
inequalities for numerical radius of $n\times n$ matrix of Hilbert
space operators are proved. Some refinements of some earlier
results were proved in literature are also given.

%/=\=/\=/=/=\=/\=/=/=\=/\=/=/=\=/\=/=/=\=/\=/=/=\=/\=/=/=\=/\=/=/=\=/\=/=/=\=/\=/=
%/=\=/\=/=/=\=/\=/=/=\=/\=/=/=\=/\=/=/=\=/\=/=/=\=/\=/=/=\=/\=/=/=\=/\=/=/=\=/\=/=
\section{Lemmas }\label{sec2}
%/=\=/\=/=/=\=/\=/=/=\=/\=/=/=\=/\=/=/=\=/\=/=/=\=/\=/=/=\=/\=/=/=\=/\=/=/=\=/\=/=
%/=\=/\=/=/=\=/\=/=/=\=/\=/=/=\=/\=/=/=\=/\=/=/=\=/\=/=/=\=/\=/=/=\=/\=/=/=\=/\=/=

\subsection{Superquadratic functions}

A function $f:J\to \mathbb{R}$ is called convex iff
\begin{align*}
f\left( {t\alpha +\left(1-t\right)\beta} \right)\le tf\left(
{\alpha} \right)+ \left(1-t\right) f\left( {\beta} \right),
\end{align*}
for all points $\alpha,\beta \in J$ and all $t\in [0,1]$. If $-f$
is convex then we say that $f$ is concave. Moreover, if $f$ is
both convex and concave, then $f$ is said to be affine.

Geometrically, for two point $\left(x,f\left(x\right)\right)$ and
$\left(y,f\left(y\right)\right)$  on the graph of $f$ are on or
below the chord joining the endpoints  for all $x,y \in I$, $x <
y$. In symbols, we write
\begin{align*}
f\left(t\right)\le   \frac{f\left( y \right)  - f\left( x \right)
}{y-x}   \left( {t-x} \right)+ f\left( x \right)
\end{align*}
for any $x \le t \le y$ and $x,y\in J$.

Equivalently, given a function $f : J\to \mathbb{R}$, we say that
$f$ admits a support line at $x \in J $ if there exists a $\lambda
\in \mathbb{R}$ such that
\begin{align*}
f\left( t \right) \ge f\left( x \right) + \lambda \left( {t - x}
\right)
\end{align*}
for all $t\in J$.

The set of all such $\lambda$ is called the subdifferential of $f$
at $x$, and it's denoted by $\partial f$. Indeed, the
subdifferential gives us the slopes of the supporting lines for
the graph of $f$. So that if $f$ is convex then $\partial f(x) \ne
\emptyset$ at all interior points of its domain.

From this point of view  Abramovich \etal \cite{SJS} extend the
above idea for what they called superquadratic functions. Namely,
a function $f:[0,\infty)\to \mathbb{R}$ is called superquadratic
provided that for all $x\ge0$ there exists a constant $C_x\in
\mathbb{R}$ such that
\begin{align*}
f\left( t \right) \ge f\left( x \right) + C_x \left( {t - x}
\right) + f\left( {\left| {t - x} \right|} \right)
\end{align*}
for all $t\ge0$. We say that $f$ is subquadratic if $-f$ is
superquadratic. Thus, for a superquadratic function we require
that $f$ lie above its tangent line plus a translation of $f$
itself.

Prima facie, superquadratic function  looks  to be stronger than
convex function itself but if $f$ takes negative values then it
may be considered as a weaker function. Therefore, if $f$ is
superquadratic and non-negative. Then $f$ is convex and increasing
\cite{SJS}.

Moreover,   the following result holds for superquadratic
function.

\begin{lemma}\cite{SJS}
	\label{lemma1}Let $f$ be superquadratic function. Then
	\begin{enumerate}
		\item $f\left(0\right)\le 0$
		
		\item If $f$ is differentiable and $f(0)=f^{\prime}(0)=0$, then $C_x=f^{\prime}(x)$  for all $x\ge0$.
		
		\item If $f(x)\ge0$ for all $x\ge 0$, then $f$ is convex and $f(0)=f^{\prime}(0)=0$.
	\end{enumerate}
\end{lemma}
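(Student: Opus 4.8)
The statement to prove is Lemma~\ref{lemma1}, which collects three structural facts about a superquadratic function $f:[0,\infty)\to\mathbb{R}$.

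\medskip

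The plan is to work directly from the defining inequality
\[
f(t)\ge f(x)+C_x(t-x)+f(|t-x|),\qquad t,x\ge0,
\]
choosing convenient values of $t$ and $x$ in each part.

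For part (1), the natural move is to set $x=t$: then the term $C_x(t-x)$ vanishes and $f(|t-x|)=f(0)$, so the inequality reads $f(t)\ge f(t)+f(0)$, which forces $f(0)\le0$ immediately. This is the easiest of the three and sets the tone.

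For part (2), assume $f$ is differentiable with $f(0)=f'(0)=0$. Fix $x>0$ (the case $x=0$ is handled by hypothesis). I would rewrite the defining inequality as
\[
\frac{f(t)-f(x)}{t-x}\ge C_x+\frac{f(|t-x|)-f(0)}{t-x}
\]
for $t>x$, and with the reversed inequality sign for $t<x$, taking care with the sign of $t-x$ when dividing. Letting $t\to x^+$, the left side tends to $f'(x)$; on the right, $\frac{f(|t-x|)-f(0)}{t-x}=\frac{f(|t-x|)-f(0)}{|t-x|}\cdot\frac{|t-x|}{t-x}\to f'(0)\cdot(+1)=0$. This gives $f'(x)\ge C_x$. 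Letting $t\to x^-$ (so $t-x<0$, $|t-x|=x-t$) and being careful that dividing by the negative quantity $t-x$ flips the inequality, the left side again tends to $f'(x)$ and the correction term tends to $f'(0)\cdot(-1)\cdot(-1)\cdot$, i.e.\ one must track that $\frac{|t-x|}{t-x}\to-1$ and the inequality reverses, yielding $f'(x)\le C_x$. Combining, $C_x=f'(x)$. The one subtlety to get right is the bookkeeping of signs and the reversal of the inequality when $t<x$; that is the only place an error is likely to creep in.

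For part (3), suppose $f(x)\ge0$ for all $x\ge0$. Together with part (1), $f(0)=0$. To get convexity I would use the standard argument: for $x,y\ge0$ and $\lambda\in[0,1]$, put $z=\lambda x+(1-\lambda)y$ and apply the superquadratic inequality once with $(t,x)=(x,z)$ and once with $(t,x)=(y,z)$:
\[
f(x)\ge f(z)+C_z(x-z)+f(|x-z|),\qquad f(y)\ge f(z)+C_z(y-z)+f(|y-z|).
\]
Forming the convex combination $\lambda\cdot(\text{first})+(1-\lambda)\cdot(\text{second})$, the $C_z$ terms cancel because $\lambda(x-z)+(1-\lambda)(y-z)=0$, and the remaining $f(|x-z|),f(|y-z|)$ terms are $\ge0$ by hypothesis, so we are left with $\lambda f(x)+(1-\lambda)f(y)\ge f(z)$, i.e.\ convexity. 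For differentiability-type information at $0$, I would note that a non-negative convex function on $[0,\infty)$ vanishing at $0$ has a right-derivative $f'(0)=\lim_{t\to0^+}f(t)/t\ge0$; to see it equals $0$, apply the superquadratic inequality with $x=0$: since $C_0=f'(0)$ by part (2) once we know differentiability at $0$, we get $f(t)\ge f(t)+f'(0)t$ wait—more carefully, with $x=0$ the inequality is $f(t)\ge f(0)+C_0 t+f(t)=C_0 t+f(t)$, hence $C_0 t\le0$ for all $t\ge0$, so $C_0\le0$; combined with $C_0=f'(0)\ge0$ from convexity, $f'(0)=0$. (One should phrase this so that it does not circularly invoke part (2); the cleanest route is: convexity gives $f$ differentiable a.e.\ and gives the right-derivative at $0$ exists and is $\ge0$; the $x=0$ instance of the superquadratic inequality gives $C_0\le0$; and the definition of the subdifferential/support constant together with convexity pins $f'(0)=C_0=0$.)

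\medskip

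The main obstacle is part (2): the limit computation requires careful handling of the two one-sided limits and the sign reversal of the inequality when dividing by $t-x<0$, as well as isolating why the correction term $f(|t-x|)/(t-x)$ has limit $0$ (it is $\pm$ the difference quotient of $f$ at $0$, which is $f'(0)=0$). Parts (1) and (3) are comparatively routine once the right substitutions are made. I would also mention at the end that these facts, especially convexity and $f(0)=f'(0)=0$ for non-negative $f$, are exactly what is needed later to apply superquadratic refinements in place of mere convexity.
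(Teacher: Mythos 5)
The paper offers no proof of this lemma --- it is quoted verbatim from Abramovich--Jameson--Sinnamon \cite{SJS} --- so the comparison is with the standard argument there. Your parts (1) and (2) are correct and follow that standard route: $t=x$ gives $f(0)\le 0$, and the two one-sided difference quotients pin $C_x=f'(x)$ for $x>0$ (your sign bookkeeping in the $t<x$ case is garbled as written, but the method and conclusion are right). The convexity half of part (3) is also correct: the $C_z$ terms cancel in the convex combination and the leftover $f(|x-z|)$, $f(|y-z|)$ terms are discarded because they are nonnegative.

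The genuine gap is your proof that $f'(0)=0$ in part (3). The $x=0$ instance of the defining inequality reads $f(t)\ge f(0)+C_0t+f(t)$; the two $f(t)$'s cancel, so it degenerates to $C_0t\le 0$ and establishes \emph{no relation whatsoever} between $C_0$ and the right derivative $f'_+(0)$. Your closing claim that ``the definition of the subdifferential/support constant together with convexity pins $f'(0)=C_0=0$'' is therefore unsupported (and you cannot invoke part (2), since differentiability is not assumed in part (3)); knowing $C_0\le 0$ and $f'_+(0)\ge 0$ separately proves nothing unless they are linked. A correct argument uses the inequality at $x>0$: taking $t=0$ gives $C_xx\ge 2f(x)$, and taking $t=2x$ gives $f(2x)\ge 2f(x)+C_xx$, whence $f(2x)\ge 4f(x)$. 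Writing $g(x)=f(x)/x$, which is nonnegative and nondecreasing by convexity and $f(0)=0$, this yields $g(x)\le\tfrac12 g(2x)$, so $g(x/2^n)\le 2^{-n}g(x)\to 0$; since $\lim_{t\to 0^+}g(t)=f'_+(0)$ exists, it must be $0$. You should repair this step before the lemma can be considered proved.
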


The next result gives a sufficient condition when convexity
(concavity) implies super(sub)quaradicity.

\begin{lemma}\cite{SJS}
	\label{lemma2}If $f^{\prime}$ is convex (concave) and
	$f(0)=f^{\prime}(0)=0$, then is super(sub)quadratic. The converse
	of is not true.
\end{lemma}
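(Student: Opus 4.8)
The plan is to prove the superquadratic statement; the subquadratic case follows at once by applying it to $-f$, since $(-f)' = -f'$ is convex when $f'$ is concave and $(-f)(0) = (-f)'(0) = 0$. So assume $f$ is differentiable on $[0,\infty)$ with $f(0)=f'(0)=0$ and $f'$ convex. Guided by Lemma \ref{lemma1}(2), I take the superquadratic constant to be $C_x = f'(x)$ and verify directly that
\[
f(t) \ge f(x) + f'(x)(t-x) + f(|t-x|) \qquad\text{for all } x,t \ge 0 .
\]

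The engine of the argument is the elementary fact that $g := f'$ is \emph{superadditive} on $[0,\infty)$: a convex function $g$ with $g(0) \le 0$ satisfies $g(a+b) \ge g(a) + g(b)$ for all $a,b \ge 0$. Indeed, for $a,b > 0$ write $a = \tfrac{b}{a+b}\cdot 0 + \tfrac{a}{a+b}(a+b)$ and $b = \tfrac{a}{a+b}\cdot 0 + \tfrac{b}{a+b}(a+b)$; applying the convexity inequality and discarding the nonpositive $g(0)$-terms gives $g(a) \le \tfrac{a}{a+b}g(a+b)$ and $g(b) \le \tfrac{b}{a+b}g(a+b)$, and adding these yields the claim (the cases $a = 0$ or $b = 0$ are trivial). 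In our situation $g(0) = f'(0) = 0$, so $f'$ is superadditive.

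Now fix $x \ge 0$ and set $\phi(t) := f(t) - f(x) - f'(x)(t-x) - f(|t-x|)$ for $t \ge 0$; since $f(0)=0$ we have $\phi(x) = 0$, so it suffices to prove that $\phi$ attains its minimum at $t=x$. As $f$ is differentiable (hence continuous), $\phi$ is continuous on $[0,\infty)$ and differentiable on $[0,\infty)\setminus\{x\}$. For $t > x$,
\[
\phi'(t) = f'(t) - f'(x) - f'(t-x) = g\big((t-x)+x\big) - g(x) - g(t-x) \ge 0
\]
by superadditivity, so $\phi$ is nondecreasing on $[x,\infty)$; for $0 < t < x$,
\[
\phi'(t) = f'(t) - f'(x) + f'(x-t) = g(t) + g(x-t) - g\big(t+(x-t)\big) \le 0 ,
\]
again by superadditivity, so $\phi$ is nonincreasing on $[0,x]$. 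Therefore $\phi(t) \ge \phi(x) = 0$ for every $t \ge 0$, which is precisely the superquadratic inequality with $C_x = f'(x)$.

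To see that the converse fails, it suffices to exhibit one superquadratic function violating one of the two conclusions. A negative constant $f \equiv -c$ with $c > 0$ works: choosing $C_x = 0$, the defining inequality becomes $-c \ge -c + 0 + (-c)$, i.e. $c \ge 0$, so $f$ is superquadratic, yet $f(0) = -c \ne 0$; further examples are given in \cite{SJS}. I do not expect a real obstacle in this proof: the only points requiring care are pinning down the correct constant $C_x = f'(x)$ (which the superadditivity of $f'$ then validates) and keeping track of the two cases $t > x$ and $t < x$, where $|t-x|$ changes form and the two terms in the superadditivity inequality swap roles.
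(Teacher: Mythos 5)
Your proof is correct. Note that the paper does not actually prove this lemma --- it is quoted from \cite{SJS} without proof --- so there is no in-paper argument to compare against; your argument is essentially the standard one from that reference. The two load-bearing steps both check out: (i) a convex $g$ with $g(0)\le 0$ is superadditive on $[0,\infty)$ (your weighted-endpoint computation is fine, and the cases $a=0$ or $b=0$ reduce to $g(0)\le 0$), and (ii) the sign analysis of $\phi'(t)=f'(t)-f'(x)\mp f'(|t-x|)$ on either side of $x$ correctly converts superadditivity of $f'$ into monotonicity of $\phi$ toward its zero at $t=x$, yielding the superquadratic inequality with $C_x=f'(x)$, consistent with Lemma \ref{lemma1}(2). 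The reduction of the subquadratic case to $-f$ is also fine. The only point worth flagging is cosmetic: your counterexample to the converse ($f\equiv -c$) refutes the converse only through the normalization $f(0)=0$, which is a somewhat degenerate failure; the more substantive point, also made in \cite{SJS}, is that there exist superquadratic functions satisfying $f(0)=f'(0)=0$ whose derivative is not convex. This does not affect the validity of what you wrote, since the lemma only asserts that the converse fails.
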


\begin{remark}
	Subquadraticity does always not imply concavity; i.e.,  there exists a subquadratic function which is convex. For example, $f(x)=x^p$, $x\ge 0$ and $1\le  p \le2$ is subquadratic and convex. For more about subquadratic see \cite{KLPP}.
\end{remark}

Among others,  Abramovich \etal \cite{SJS} proved that the
inequality
\begin{align}
f\left( {\int {\varphi d\mu } } \right) \le   \int {f\left( {\varphi \left( s \right)} \right)-f\left( {\left| {\varphi \left( s \right) - \int {\varphi d\mu } } \right|} \right)d\mu \left( s \right)}\label{eq2.1}
\end{align}
holds for all probability measures $\mu$ and all nonnegative,
$\mu$-integrable functions $\varphi$ if and only if $f$ is superquadratic. This inequality plays a main role overall our presented results below.\\

\subsection{The mixed Schwarz inequality}

The mixed Schwarz inequality was introduced in \cite{TK}, as
follows:
\begin{lemma}
	\label{lemma3}  Let  $A\in \mathscr{B}\left( \mathscr{H}\right)^+ $, then
	\begin{align}
	\left| {\left\langle {Ax,y} \right\rangle} \right|  ^2  \le \left\langle {\left| A \right|^{2\alpha } x,x} \right\rangle \left\langle {\left| {A^* } \right|^{2\left( {1 - \alpha } \right)} y,y} \right\rangle, \qquad 0\le \alpha \le 1. \label{eq2.2}
	\end{align}
	for any   vectors $x,y\in \mathscr{H}$
\end{lemma}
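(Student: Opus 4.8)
The plan is to follow the classical route of Kato, namely to use the polar decomposition of $A$ together with the ordinary Cauchy--Schwarz inequality and the continuous functional calculus. First I would dispose of the two endpoints, where no decomposition is needed: for $\alpha=1$ the claim reads $\left|\langle Ax,y\rangle\right|^2\le \langle \left|A\right|^2x,x\rangle\,\langle y,y\rangle=\|Ax\|^2\|y\|^2$, which is Cauchy--Schwarz, and for $\alpha=0$ it reads $\left|\langle x,A^*y\rangle\right|^2\le\|x\|^2\|A^*y\|^2$, again Cauchy--Schwarz. So from here on I fix $0<\alpha<1$, in which case the functions $t\mapsto t^{\alpha}$ and $t\mapsto t^{1-\alpha}$ are continuous on $[0,\infty)$ and vanish at $0$, so the positive operators $\left|A\right|^{\alpha}$ and $\left|A\right|^{1-\alpha}$ are unambiguously defined and satisfy $\left|A\right|=\left|A\right|^{1-\alpha}\left|A\right|^{\alpha}$ by functional calculus.

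Next I would write the polar decomposition $A=U\left|A\right|$, where $U$ is a partial isometry with $U^*U\left|A\right|=\left|A\right|$ (equivalently $\left|A\right|U^*U=\left|A\right|$) and $\left|A\right|=(A^*A)^{1/2}$. Then for unit vectors $x,y$, moving $U\left|A\right|^{1-\alpha}$ to the second slot as its adjoint $\left|A\right|^{1-\alpha}U^*$ gives
\begin{align*}
\langle Ax,y\rangle=\langle U\left|A\right|^{1-\alpha}\left|A\right|^{\alpha}x,\,y\rangle=\langle \left|A\right|^{\alpha}x,\;\left|A\right|^{1-\alpha}U^*y\rangle .
\end{align*}
Applying Cauchy--Schwarz on the right and rewriting the two norms through the functional calculus yields
\begin{align*}
\left|\langle Ax,y\rangle\right|^2\le\big\|\left|A\right|^{\alpha}x\big\|^2\,\big\|\left|A\right|^{1-\alpha}U^*y\big\|^2=\langle \left|A\right|^{2\alpha}x,x\rangle\;\big\langle U\left|A\right|^{2(1-\alpha)}U^*y,\,y\big\rangle .
\end{align*}

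The only step that is not completely elementary, and the one I expect to be the main technical point, is the identification $U\left|A\right|^{2(1-\alpha)}U^*=\left|A^*\right|^{2(1-\alpha)}$. Here I would first note $AA^*=U\left|A\right|^2U^*$, and, using $\left|A\right|U^*U=\left|A\right|$, that $(U\left|A\right|U^*)^2=U\left|A\right|^2U^*=AA^*$, so by uniqueness of the positive square root $U\left|A\right|U^*=(AA^*)^{1/2}=\left|A^*\right|$. More generally, for any polynomial $p$ with $p(0)=0$ one has $(U\left|A\right|U^*)^k=U\left|A\right|^kU^*$ for $k\ge1$, hence $p(U\left|A\right|U^*)=U\,p(\left|A\right|)\,U^*$; passing to the uniform limit (and using norm-continuity of $T\mapsto UTU^*$ and of the functional calculus) this persists for every continuous function vanishing at $0$, and applying it to $t\mapsto t^{2(1-\alpha)}$ gives the claimed identity. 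Substituting it into the displayed bound produces exactly \eqref{eq2.2}. (When $A$ is positive, as written in the statement, one may take $U$ to be the orthogonal projection onto the closure of the range of $A$, so that $\left|A\right|=\left|A^*\right|=A$ and the whole argument collapses to $\langle Ax,y\rangle=\langle A^{\alpha}x,A^{1-\alpha}y\rangle$ followed by Cauchy--Schwarz.)
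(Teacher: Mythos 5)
Your proof is correct, but note that the paper itself offers no proof of Lemma \ref{lemma3} to compare against: it is quoted from Kato's 1952 paper \cite{TK} as a known result. Your argument is the classical one --- polar decomposition $A=U\left|A\right|$, splitting $\left|A\right|=\left|A\right|^{1-\alpha}\left|A\right|^{\alpha}$, Cauchy--Schwarz, and the identity $U\,f(\left|A\right|)\,U^{*}=f(\left|A^{*}\right|)$ for continuous $f$ with $f(0)=0$, which you justify carefully via polynomials without constant term and a uniform limit; the endpoint cases $\alpha\in\{0,1\}$ are handled separately, as they must be since $t\mapsto t^{0}$ does not vanish at $0$. An alternative route, which the paper does use for the special case $\alpha=\tfrac12$ and for the generalization in Lemma \ref{lemma5}, goes through Lemma \ref{lemma4}: one checks that $\left[\begin{smallmatrix}\left|A\right|^{2\alpha} & A^{*}\\ A & \left|A^{*}\right|^{2(1-\alpha)}\end{smallmatrix}\right]$ is positive in $\mathscr{B}(\mathscr{H}\oplus\mathscr{H})$, which is equivalent to \eqref{eq2.2}; that packaging is what makes Kittaneh's generalization \eqref{kittaneh.ineq} go through, whereas your direct computation is more self-contained. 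You are also right to flag, parenthetically, that the hypothesis $A\in\mathscr{B}(\mathscr{H})^{+}$ as printed must be a slip for $A\in\mathscr{B}(\mathscr{H})$: for genuinely positive $A$ one has $\left|A\right|=\left|A^{*}\right|=A$ and the statement degenerates, while the presence of $\left|A^{*}\right|$ in \eqref{eq2.2} and the way the lemma is used later (e.g.\ in \eqref{eq3.4}) show the general case is intended --- which is exactly the case your proof covers.
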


In order to generalize \eqref{eq2.2}, Kittaneh in \cite{FK4}  used
the key lemma
\begin{lemma}
	\label{lemma4}Let  $A,B\in \mathscr{B}\left( \mathscr{H}\right)^+$. Then $\left[ {\begin{array}{*{20}c} A & {C^* }  \\C & B  \\\end{array}} \right]$  is positive in $\mathscr{B}\left(\mathscr{H}\oplus \mathscr{H}\right)$ if and only if $
	\left| {\left\langle {Cx,y} \right\rangle } \right|^2  \le \left\langle {Ax,x} \right\rangle \left\langle {By,y} \right\rangle $ for every vectors $x,y\in \mathscr{H}$,
\end{lemma}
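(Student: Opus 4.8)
\emph{Proof proposal.} The plan is to reduce the positivity of the block operator $\mathbf{T}$ appearing in the statement to an elementary two‑variable positivity fact, by testing $\mathbf{T}$ only against the ``diagonal'' vectors $\xi = x\oplus y$. Since $A,B$ are positive (hence selfadjoint) and the off‑diagonal blocks are adjoints of one another, $\mathbf{T}$ is selfadjoint, so $\mathbf{T}\ge 0$ is equivalent to $\left\langle \mathbf{T}\xi,\xi\right\rangle\ge 0$ for all $\xi\in\mathscr{H}\oplus\mathscr{H}$. A one‑line computation, using $\left\langle C^{*}y,x\right\rangle=\overline{\left\langle Cx,y\right\rangle}$, gives
\begin{align*}
\left\langle \mathbf{T}\left(x\oplus y\right),x\oplus y\right\rangle
= \left\langle Ax,x\right\rangle + \left\langle By,y\right\rangle + 2\,\mathrm{Re}\left\langle Cx,y\right\rangle .
\end{align*}
Thus the whole lemma reduces to: for $A,B\ge 0$, one has $\left\langle Ax,x\right\rangle+\left\langle By,y\right\rangle+2\,\mathrm{Re}\left\langle Cx,y\right\rangle\ge 0$ for all $x,y$ if and only if $\left|\left\langle Cx,y\right\rangle\right|^{2}\le \left\langle Ax,x\right\rangle\left\langle By,y\right\rangle$ for all $x,y$.

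For the ``if'' direction I would just estimate, for arbitrary $x,y$, using $2\,\mathrm{Re}\left\langle Cx,y\right\rangle\ge -2\left|\left\langle Cx,y\right\rangle\right|$, then the hypothesis, and finally the elementary inequality $2\sqrt{a}\sqrt{b}\le a+b$ with $a=\left\langle Ax,x\right\rangle\ge 0$ and $b=\left\langle By,y\right\rangle\ge 0$ (the nonnegativity here is exactly where $A,B\ge 0$ is used), to conclude that $\left\langle \mathbf{T}\left(x\oplus y\right),x\oplus y\right\rangle\ge \left(\sqrt{\left\langle Ax,x\right\rangle}-\sqrt{\left\langle By,y\right\rangle}\right)^{2}\ge 0$, hence $\mathbf{T}\ge 0$.

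For the ``only if'' direction, fix $x,y\in\mathscr{H}$ and apply the scalar inequality to the scaled vectors $tx$ and $sy$, with $t,s\in\mathbb{R}$ arbitrary: the real quadratic form $t^{2}\left\langle Ax,x\right\rangle+2ts\,\mathrm{Re}\left\langle Cx,y\right\rangle+s^{2}\left\langle By,y\right\rangle$ in $(t,s)$ is then $\ge 0$ on all of $\mathbb{R}^{2}$, so its $2\times 2$ real symmetric matrix is positive semidefinite and in particular has nonnegative determinant, giving $\left(\mathrm{Re}\left\langle Cx,y\right\rangle\right)^{2}\le \left\langle Ax,x\right\rangle\left\langle By,y\right\rangle$. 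To promote $\mathrm{Re}\left\langle Cx,y\right\rangle$ to $\left|\left\langle Cx,y\right\rangle\right|$, I would rerun this with $y$ replaced by $\lambda y$ for a unit‑modulus scalar $\lambda$ chosen so that $\left\langle Cx,\lambda y\right\rangle=\left|\left\langle Cx,y\right\rangle\right|$ (the case $\left\langle Cx,y\right\rangle=0$ being trivial); this leaves $\left\langle Ax,x\right\rangle$ unchanged and replaces $\left\langle By,y\right\rangle$ by $\left|\lambda\right|^{2}\left\langle By,y\right\rangle=\left\langle By,y\right\rangle$, so the right‑hand side is unaffected and we obtain the asserted Cauchy–Schwarz‑type bound.

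I do not expect a genuine obstacle: this is the classical block‑positivity criterion, and the argument is essentially a nonnegative‑quadratic‑form computation. The only points requiring a little care — and they are routine — are fixing the phase normalization correctly (which depends only on the convention for which slot of $\left\langle\cdot,\cdot\right\rangle$ is linear) and the degenerate cases such as $\left\langle Ax,x\right\rangle=0$; in the ``only if'' direction the determinant/positive‑semidefinite packaging disposes of these automatically, and in the ``if'' direction they are immediate.
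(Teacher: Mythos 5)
Your argument is correct and complete: the identity $\left\langle \mathbf{T}\left(x\oplus y\right),x\oplus y\right\rangle=\left\langle Ax,x\right\rangle+\left\langle By,y\right\rangle+2\,\mathrm{Re}\left\langle Cx,y\right\rangle$ is right, the AM--GM step gives the ``if'' direction, and the real-scaling plus phase-rotation argument gives the ``only if'' direction (including the degenerate case $\left\langle Ax,x\right\rangle=0$, which your determinant formulation handles). Note that the paper does not prove this lemma at all --- it is quoted from Kittaneh's work \cite{FK4} as a known block-positivity criterion --- so there is nothing to compare against; your proof is the standard one and would serve as a self-contained justification.
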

to prove  that
\begin{lemma}
	\label{lemma5}  Let  $A,B\in \mathscr{B}\left( \mathscr{H}\right)$ such that $|A|B=B^*|A|$.
	If $f$ and $g$ are nonnegative continuous functions on $\left[0,\infty\right)$ satisfying $f(t)g(t) =t$ $(t\ge0)$, then
	\begin{align}
	\left| {\left\langle {ABx,y} \right\rangle } \right| \le r\left(B\right)\left\| {f\left( {\left| A \right|} \right)x} \right\|\left\| {g\left( {\left| {A^* } \right|} \right)y} \right\|\label{kittaneh.ineq}
	\end{align}
	for any   vectors $x,y\in  \mathscr{H} $.
\end{lemma}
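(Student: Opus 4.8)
The plan is to deduce Lemma~\ref{lemma5} from the generalized mixed Schwarz inequality of Lemma~\ref{lemma3} together with the positivity criterion in Lemma~\ref{lemma4}, exploiting the commutation hypothesis $|A|B = B^*|A|$ to relate the polar-type decomposition of $AB$ to functions of $|A|$ and $|A^*|$. First I would record the basic identity: since $f,g$ are nonnegative continuous functions on $[0,\infty)$ with $f(t)g(t)=t$, the operator identities $f(|A|)g(|A|)=|A|$ and $f(|A^*|)g(|A^*|)=|A^*|$ hold by the functional calculus. The goal is to bound $\left|\left\langle ABx,y\right\rangle\right|$ by $r(B)\left\|f(|A|)x\right\|\,\left\|g(|A^*|)y\right\|$.

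The key step is to produce, for each pair of vectors $x,y$, the scalar inequality
\begin{align*}
\left|\left\langle ABx,y\right\rangle\right|^2 \le r(B)^2 \left\langle f(|A|)^2 x,x\right\rangle \left\langle g(|A^*|)^2 y,y\right\rangle,
\end{align*}
which by Lemma~\ref{lemma4} is equivalent to positivity of the $2\times 2$ operator matrix $\left[\begin{smallmatrix} r(B)^2 f(|A|)^2 & (AB)^* \\ AB & g(|A^*|)^2 \end{smallmatrix}\right]$ on $\mathscr{H}\oplus\mathscr{H}$. To get there I would first treat the case where $g(t)=t^{1-\alpha}$, $f(t)=t^{\alpha}$ and then pass to the general continuous $f,g$ by an approximation/monotonicity argument, or — more cleanly — argue directly. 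The heart of the matter is the following: from $|A|B = B^*|A|$ one shows inductively that $|A|^k B = (B^*)^k |A|^k$ is \emph{not} quite what is needed; rather one uses that $|A|B=B^*|A|$ implies $p(|A|)B = $ (an operator built from $B$ and $p(|A|)$) only for polynomials in a controlled way, so instead I would follow Kittaneh's route: write $AB = U|A|B$ in terms of the polar decomposition $A=U|A|$, use the hypothesis to move $|A|$ past $B$, and then split $|A| = f(|A|)g(|A|)$ to obtain $\langle ABx,y\rangle = \langle g(|A|)B\, x', \, f(|A^*|) y\rangle$-type expressions after absorbing $U$, where the substitution $U f(|A|) = f(|A^*|)U$ (valid since $U|A|U^* = |A^*|$ on the appropriate subspace) is used. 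Applying Cauchy--Schwarz and the spectral radius bound $\|g(|A|)B z\| \le \|g(|A|)\| \cdot$ — no; one must instead bound the middle factor $\langle B w, B w\rangle$-style terms by $r(B)^2$, which works because $|A|B = B^*|A|$ forces the relevant compression of $B$ to be a normaloid (indeed self-adjoint-like) operator so that its norm equals its spectral radius.

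Concretely, the cleanest execution is: set $C = g(|A^*|)^{-1}$-regularized operators if invertibility fails, or directly verify positivity of the block matrix by checking $\left\langle \left[\begin{smallmatrix} r(B)^2 f(|A|)^2 & (AB)^* \\ AB & g(|A^*|)^2\end{smallmatrix}\right] \binom{u}{v},\binom{u}{v}\right\rangle \ge 0$; expanding gives $r(B)^2\|f(|A|)u\|^2 + \|g(|A^*|)v\|^2 + 2\operatorname{Re}\langle ABu,v\rangle$, and using $\langle ABu,v\rangle = \langle g(|A|)B u, f(|A^*|) v\rangle$ after the commutation manipulation, the cross term is bounded below by $-2\|g(|A|)Bu\|\,\|f(|A^*|)v\|$. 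Since the commutation hypothesis makes $g(|A|)B$ satisfy $\|g(|A|)Bu\| \le r(B)\,\|f(|A|)u\|$ — this is the decisive inequality, following from $f(|A|)g(|A|)=|A|$, the identity $|A|B=B^*|A|$, and the fact that a Hilbert-space operator $T$ with $|A|T = T^*|A|$ has $w$- or norm-behavior controlled by $r(T)$ — the whole expression is $\ge \left(\,r(B)\|f(|A|)u\| - \|f(|A^*|)v\|\,\right)^2 \ge 0$ after matching terms, wait one must be careful that $\|f(|A^*|)v\|$ pairs with $\|g(|A^*|)v\|$; the correct completion of the square uses $\|g(|A^*|)v\|\cdot r(B)\|f(|A|)u\| \ge \|g(|A^*|)v\|\,\|g(|A|)Bu\|\cdot$ — at this point one simply invokes Lemma~\ref{lemma3} applied to the operator $g(|A|)B$ in place of $A$, whose absolute values are dominated appropriately.

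The main obstacle I anticipate is precisely the step $\|g(|A|)Bu\| \le r(B)\,\|f(|A|)u\|$ (equivalently, controlling $\langle B^*g(|A|)^2 B\, u,u\rangle$ by $r(B)^2\langle f(|A|)^2u,u\rangle$): it is here that the hypothesis $|A|B = B^*|A|$ must be used in full strength, together with the factorization $f(|A|)g(|A|) = |A|$, to convert an a priori norm estimate (which would only give $\|B\|$) into a spectral-radius estimate. The trick is that $|A|B = B^*|A|$ says $|A|^{1/2}(|A|^{1/2}B|A|^{-1/2})|A|^{1/2} = $ a self-adjoint-like conjugation, so the operator $S := f(|A|)^{-1}g(|A|)B f(|A|)$-type similarity (handled on the closure of the range, with a limiting argument for the kernel) is self-adjoint, hence normaloid, giving $\|S\| = r(S) = r(B)$; unwinding the similarity yields the claimed bound. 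I would handle the non-invertibility of $f(|A|)$ and $g(|A^*|)$ by replacing $|A|$ with $|A| + \varepsilon 1_{\mathscr{H}}$ throughout, proving the inequality for each $\varepsilon>0$, and letting $\varepsilon \to 0^+$ using continuity of $f,g$ and of the functional calculus. Once the block-positivity (equivalently the scalar inequality $\left|\langle ABx,y\rangle\right|^2 \le r(B)^2 \|f(|A|)x\|^2\|g(|A^*|)y\|^2$) is established, taking square roots yields \eqref{kittaneh.ineq} immediately.
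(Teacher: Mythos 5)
The paper itself offers no proof of Lemma~\ref{lemma5}: it is quoted from Kittaneh's paper \cite{FK4}, so there is no in-paper argument to compare yours with, and your attempt has to stand on its own. As written it is not a proof but a sequence of partial plans, several of which you abandon mid-sentence, and the two steps on which the final version rests are both incorrect. The ``decisive inequality'' $\|g(|A|)Bu\| \le r(B)\|f(|A|)u\|$ is false: already for $B=1_{\mathscr{H}}$ (for which the commutation hypothesis is vacuous and $r(B)=1$) it asserts $g(|A|)^2\le f(|A|)^2$, which fails for $f(t)=t^{1/4}$, $g(t)=t^{3/4}$ whenever $\||A|\|>1$. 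The similarity you invoke is also the wrong one: what $|A|B=B^*|A|$ gives (say for invertible $|A|$) is that $|A|^{1/2}B|A|^{-1/2}$ is self-adjoint, hence of norm $r(B)$ --- and this proves exactly the case $f=g=t^{1/2}$ and nothing more. For other $f$ one would need $f(|A|)^2B=B^*f(|A|)^2$, which does not follow from $|A|B=B^*|A|$ (indeed $|A|^2B=|A|B^*|A|$ need not equal $B^*|A|^2=|A|B|A|$), and the operator $f(|A|)^{-1}g(|A|)Bf(|A|)$ you write down is a similarity of $g(|A|)B$, so it controls $r(g(|A|)B)$, not $r(B)$. Finally, the bookkeeping never closes: your Cauchy--Schwarz step produces $\|g(|A|)Bu\|\,\|f(|A^*|)v\|$, and no combination of the estimates you propose converts this into $\|f(|A|)u\|\,\|g(|A^*|)v\|$.

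The obstruction is not an artifact of your method. Take $A=\diag(1,\tfrac14)$ and $B=\bigl(\begin{smallmatrix}0&1/4\\1&0\end{smallmatrix}\bigr)$, so that $|A|B=B^*|A|=\bigl(\begin{smallmatrix}0&1/4\\1/4&0\end{smallmatrix}\bigr)$ and $r(B)=\tfrac12$. With $f(t)=t^{1/4}$, $g(t)=t^{3/4}$ and the standard basis vectors $x=e_1$, $y=e_2$ one gets $|\langle ABx,y\rangle|=\tfrac14$, while $r(B)\,\|f(|A|)x\|\,\|g(|A^*|)y\|=\tfrac12\cdot 1\cdot 4^{-3/4}=2^{-5/2}<\tfrac14$; the choice $f=g=t^{1/2}$ gives equality here, and $f(t)=t^{\alpha}$ with any $\alpha\ne\tfrac12$ violates the claimed bound for one of the pairs $(e_1,e_2)$, $(e_2,e_1)$. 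So no argument can establish \eqref{kittaneh.ineq} in the generality stated; you should go back to \cite{FK4} and check the precise hypotheses under which Kittaneh proves his inequality. What can be proved cleanly, and what your block-matrix strategy does deliver, is the case $f=g=t^{1/2}$: the form $[u,v]=\langle |A|u,v\rangle$ is positive semidefinite, the hypothesis makes $B$ symmetric for it ($[Bu,v]=[u,Bv]$), and the iterated Cauchy--Schwarz (``power'') trick together with $\|B^{2^n}\|^{1/2^n}\to r(B)$ yields $|[Bu,v]|\le r(B)[u,u]^{1/2}[v,v]^{1/2}$, whence $|\langle ABx,y\rangle|\le r(B)\langle |A|x,x\rangle^{1/2}\langle |A^*|y,y\rangle^{1/2}$ after writing $A=U|A|$ and using $U|A|U^*=|A^*|$.
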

Clearly, by setting  $B=1_{\mathscr{H}}$ and choosing
$f(t)=t^{\alpha}$, $g(t)=t^{1-\alpha}$ we refer to \eqref{eq2.2}.

The following interesting estimates of spectral radius also
obtained by Kittaneh in \cite{F}.
\begin{lemma}
	\label{lemma6}If $A,B\in \mathscr{B}\left( \mathscr{H}\right)$.
	Then
	\begin{multline}
	r\left( {AB} \right) \\\le \frac{1}{4}\left( {\left\| {AB}
		\right\| + \left\| {BA} \right\| + \sqrt {\left(\left\| {AB}
			\right\|- \left\| {BA} \right\|\right)^2 + 4\min \left\{ {\left\|
				A \right\|\left\| {BAB} \right\|,\left\| B \right\|\left\| {ABA}
				\right\|} \right\}}} \right) \label{fact3}
	\end{multline}
\end{lemma}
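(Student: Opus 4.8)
The plan is to realise $2\,r(AB)$ as the spectral radius of a suitable $2\times 2$ operator matrix written as a product, and then to dominate that spectral radius by the scalar spectral radius of the associated matrix of norms, which has a closed form. Two standard ingredients are needed. First, for a block operator matrix on $\mathscr{H}\oplus\mathscr{H}$ one has
\begin{align*}
r\left(\begin{bmatrix} X & Y \\ Z & W \end{bmatrix}\right)\le r\left(\begin{bmatrix} \|X\| & \|Y\| \\ \|Z\| & \|W\| \end{bmatrix}\right);
\end{align*}
this follows because each block of the $n$-th power $\left[\begin{smallmatrix} X & Y \\ Z & W\end{smallmatrix}\right]^{n}$ is a sum of $n$-fold products and so has norm at most the corresponding entry of $\left[\begin{smallmatrix} \|X\| & \|Y\| \\ \|Z\| & \|W\|\end{smallmatrix}\right]^{n}$, whence (the $2\times 2$ operator norm being monotone on the positive cone) $\bigl\|\left[\begin{smallmatrix} X & Y \\ Z & W\end{smallmatrix}\right]^{n}\bigr\|\le \bigl\|\left[\begin{smallmatrix} \|X\| & \|Y\| \\ \|Z\| & \|W\|\end{smallmatrix}\right]^{n}\bigr\|$; taking $n$-th roots and $n\to\infty$ in $r(S)=\lim_n\|S^n\|^{1/n}$ gives the claim. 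Second, the Perron root of a $2\times 2$ matrix with nonnegative entries is $r\left(\begin{bmatrix} a & b \\ c & d\end{bmatrix}\right)=\frac12\bigl(a+d+\sqrt{(a-d)^2+4bc}\,\bigr)$.

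With these in hand, I would use the factorisation
\begin{align*}
\begin{bmatrix} AB & A \\ BAB & BA \end{bmatrix}=\begin{bmatrix} A \\ BA \end{bmatrix}\begin{bmatrix} B & 1_{\mathscr{H}} \end{bmatrix},
\end{align*}
viewed as a product of operators $\mathscr{H}\to\mathscr{H}\oplus\mathscr{H}$ and $\mathscr{H}\oplus\mathscr{H}\to\mathscr{H}$. Since the nonzero spectrum of $PQ$ coincides with that of $QP$ we have $r(PQ)=r(QP)$; here the reversed product is $\begin{bmatrix} B & 1_{\mathscr{H}}\end{bmatrix}\begin{bmatrix} A \\ BA\end{bmatrix}=2BA$, so, using also $r(BA)=r(AB)$, the matrix on the left has spectral radius exactly $2\,r(AB)$. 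Feeding this matrix into the two ingredients above yields
\begin{align*}
2\,r(AB)\le \frac12\Bigl(\|AB\|+\|BA\|+\sqrt{(\|AB\|-\|BA\|)^2+4\,\|A\|\,\|BAB\|}\,\Bigr),
\end{align*}
i.e. the asserted bound with the first member of the minimum inside the square root.

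Running the identical argument with the factorisation $\begin{bmatrix} AB & ABA \\ B & BA\end{bmatrix}=\begin{bmatrix} AB \\ B\end{bmatrix}\begin{bmatrix} 1_{\mathscr{H}} & A\end{bmatrix}$, whose reversed product is $2AB$, gives the same estimate with $\|B\|\,\|ABA\|$ in place of $\|A\|\,\|BAB\|$. Since both inequalities hold for $r(AB)$, one may keep whichever discriminant is smaller, i.e. replace $4\,\|A\|\,\|BAB\|$ by $4\min\{\|A\|\,\|BAB\|,\|B\|\,\|ABA\|\}$, which is precisely \eqref{fact3}.

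The two preliminary ingredients are routine; the only genuine step is spotting the two rank-factorisations that convert $2\,r(AB)$ into the spectral radius of a $2\times 2$ operator matrix whose matrix of norms has exactly the entries $\|AB\|,\|BA\|$ on the diagonal and $\|A\|,\|BAB\|$ (resp. $\|ABA\|,\|B\|$) off it. The point requiring a little care is the use of $r(PQ)=r(QP)$ for operators acting between $\mathscr{H}$ and $\mathscr{H}\oplus\mathscr{H}$; this is legitimate because the spectra of $PQ$ and $QP$ differ at most in the point $0$, which does not affect the spectral radius.
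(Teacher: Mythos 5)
Your proof is correct, and since the paper states Lemma \ref{lemma6} without proof (it is quoted from Kittaneh \cite{F}), the relevant comparison is with Kittaneh's original argument, which yours essentially reproduces: the domination of the spectral radius of a block operator matrix by that of its matrix of norms, the two factorisations whose reversed products are $2BA$ and $2AB$, and the explicit Perron root of a nonnegative $2\times 2$ matrix. All the individual steps — including the use of $\sigma(PQ)\setminus\{0\}=\sigma(QP)\setminus\{0\}$ for operators acting between $\mathscr{H}$ and $\mathscr{H}\oplus\mathscr{H}$, and the entrywise monotonicity argument behind $\|S^n\|\le\|\widehat{S}^{\,n}\|$ — are sound.
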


In some of our results we need the following two fundamental norm
estimates, which  are:
\begin{align}
\label{fact1}\left\| {A+ B } \right\|\le \frac{1}{2}\left(
{\left\| A \right\| + \left\| B \right\| + \sqrt
	{\left( {\left\| A \right\| - \left\| B \right\|} \right)^2  +
		4\left\| {A^{1/2} B^{1/2} } \right\|^2 } } \right),
\end{align}
and
\begin{align}
\label{fact2}\left\| {A^{1/2} B^{1/2} } \right\|  \le\left\| {A  B
} \right\| ^{1/2}.
\end{align}
Both estimates are valid for all positive operators $A,B\in
\mathscr{B}\left( \mathscr{H}\right)$.
%=========================================================================================
%=========================================================================================
\section{Refining H\"{o}lder-McCarty inequality and  mixed Schwarz inequality}\label{sec3}
%=========================================================================================
%=========================================================================================
In the this part we give some new refinements of the `mixed'
Schwarz inequality and its generalization based on a new
refinement of H\"{o}lder--McCarty inequality. The next lemma plays
a main role in our main results.
\begin{lemma}
	\label{lemma7}Let $A\in \mathscr{B}\left( \mathscr{H}\right)^+$, then
	\begin{align}
	\label{eq3.1}  \left\langle {Ax,x} \right\rangle^p \le
	\left\langle { A^p x,x} \right\rangle- \left\langle{\left| {A -
			\left\langle { A x,x} \right\rangle 1_{\mathcal{H}} }
		\right|^px,x} \right\rangle \le \left\langle { A^p x,x}
	\right\rangle
	\end{align}
	for all $p \ge2$, and
	\begin{align}
	\label{eq3.2}\left\langle {Ax,x} \right\rangle^p  \ge \left\langle
	{ A^p x,x} \right\rangle- \left\langle{\left| {A - \left\langle {A
				x,x} \right\rangle 1_{\mathcal{H}} } \right|^px,x} \right\rangle
	\end{align}
	for all $0<p<2$ and every $x\in \mathcal{H}$.
\end{lemma}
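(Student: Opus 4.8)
The plan is to deduce both chains from the Abramovich--Jameson--Sinnamon inequality \eqref{eq2.1} via the spectral theorem. Throughout I fix $A\in\mathscr{B}\left(\mathscr{H}\right)^+$ and a \emph{unit} vector $x$ (under scaling of $x$ the three displayed quantities grow at different rates, so the normalization $\left\|x\right\|=1$ is implicit in the statement), and I write $c:=\left\langle Ax,x\right\rangle\ge0$. Let $E$ be the spectral measure of $A$ and set $\mu_x(\cdot):=\left\langle E(\cdot)x,x\right\rangle$; since $\left\|x\right\|=1$ this is a probability measure supported on $\sigma\left(A\right)\subseteq[0,\infty)$, and $\left\langle g\left(A\right)x,x\right\rangle=\int_{\sigma(A)}g\,d\mu_x$ for every bounded Borel function $g$. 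Applying this with $g(\lambda)=\lambda$, with $g(\lambda)=\lambda^p$, and --- using that $A-c\,1_{\mathscr{H}}$ is self-adjoint, so that $\left|A-c\,1_{\mathscr{H}}\right|^p$ is the functional-calculus image of $\lambda\mapsto\left|\lambda-c\right|^p$ --- with $g(\lambda)=\left|\lambda-c\right|^p$, the three terms occurring in \eqref{eq3.1} and \eqref{eq3.2} become respectively $\int\lambda\,d\mu_x$, $\int\lambda^p\,d\mu_x$ and $\int\left|\lambda-c\right|^p\,d\mu_x$, with $c=\int\lambda\,d\mu_x$.

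Next I take $\varphi(\lambda)=\lambda$ (nonnegative and $\mu_x$-integrable) and $f(t)=t^p$ in \eqref{eq2.1}. For $p\ge2$ this $f$ is superquadratic: $f(0)=f'(0)=0$ and $f'(t)=pt^{p-1}$ is convex on $[0,\infty)$ because $p-1\ge1$, so Lemma~\ref{lemma2} applies (this is also the standard power example of \cite{SJS}). Hence \eqref{eq2.1} gives
\[
c^p=f\!\left(\int\lambda\,d\mu_x\right)\le\int f(\lambda)\,d\mu_x-\int f\bigl(\left|\lambda-c\right|\bigr)\,d\mu_x=\left\langle A^p x,x\right\rangle-\bigl\langle\bigl|A-c\,1_{\mathscr{H}}\bigr|^p x,x\bigr\rangle,
\]
which is the left-hand inequality of \eqref{eq3.1}; the right-hand one is immediate, since $\left|A-c\,1_{\mathscr{H}}\right|^p\ge0$ forces $\bigl\langle\left|A-c\,1_{\mathscr{H}}\right|^p x,x\bigr\rangle\ge0$. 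For $0<p<2$ the function $t^p$ is instead subquadratic, so the inequality in \eqref{eq2.1} reverses and the identical computation yields \eqref{eq3.2}.

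The only point that needs care is the subquadraticity of $t\mapsto t^p$ on the whole range $0<p<2$. For $1<p<2$ it follows from Lemma~\ref{lemma2} applied to $-f$ (one checks $(-f)(0)=(-f)'(0)=0$ and that $(-f)'$ is convex), while $p=2$ is the equality case; but for $0<p\le1$ the derivative $f'$ blows up at $0$, so Lemma~\ref{lemma2} is not directly available and one must instead invoke the known fact, recorded in \cite{SJS}, that $t^r$ is subquadratic for every $0<r\le2$. Once that is granted the rest is routine bookkeeping with the representing measure $\mu_x$, so I expect this small gap --- rather than any genuine computation --- to be the only thing requiring attention.
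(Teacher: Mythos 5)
Your proof is correct and follows essentially the same route as the paper's: represent $A$ via its spectral measure, view $\left\langle dE(\cdot)x,x\right\rangle$ as a probability measure for a unit vector $x$, and apply the Abramovich--Jameson--Sinnamon inequality \eqref{eq2.1} with the superquadratic function $t\mapsto t^p$ for $p\ge2$, reversing it via subquadraticity for $0<p<2$. Your added care about the implicit normalization $\left\|x\right\|=1$ and about sourcing the subquadraticity of $t^p$ for $0<p\le 1$ from \cite{SJS} (where Lemma~\ref{lemma2} does not directly apply) addresses points the paper's proof glosses over.
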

\begin{proof}
	Since $A$ is positive then there $B\in \mathscr{B}\left(
	\mathscr{H}\right)$ such that $A=B^*B$. Also, since $B^*B$ is
	always positive and selfadjoint, thus by the spectral representation
	theorem  $A$ can be represented as $A=\int_0^\infty  {tdE\left( t
		\right)} $. Employing the inequality \eqref{eq2.1} for the
	superquadratic function  $f\left(t\right)=t^p$, $t\in
	\left[0,\infty\right)$ $p\ge2$, then we have
	\begin{align*}
	\left\langle {Ax,x} \right\rangle^p &= \left( \int_0^\infty  {t\left\langle {dE\left( t \right)x,x} \right\rangle}\right)^p\\
	&\le \int_0^\infty  {t^p\left\langle {dE\left( t \right)x,x} \right\rangle} -\int_0^\infty { \left|t  - \int_0^\infty  {s\left\langle {dE\left( s \right)x,x} \right\rangle}\right|^p \left\langle {dE\left( t \right)x,x} \right\rangle}\\
	&=\left\langle { A^p x,x} \right\rangle- \left\langle{\left| {A -
			\left\langle{A x,x} \right\rangle 1_{\mathcal{H}} } \right|^px,x}
	\right\rangle.
	\end{align*}
	The inequality \eqref{eq3.2} follows in similar manner by applying
	the reverse of  \eqref{eq3.1} for the subquadratic function
	$f(t)=t^p$, $0<p\le 2$.
\end{proof}
The inequalities \eqref{eq3.1} and \eqref{eq3.2}  were proved in
\cite{AM} in different context and only for positive selfadjoint
operators. Also, we should note that, a stronger version for
positive selfadjoint operators was proved earlier in \cite{KS}
(see also \cite{K}) where different approach were used. Our
presented proof above  is more general and completely different.

\begin{remark}
	Let $A\in \mathscr{B}\left( \mathscr{H}\right)^+$, then the
	McCatry inequality reads that
	\begin{align}
	\left\langle {Ax,x} \right\rangle^p  \ge \left\langle { A^p x,x}
	\right\rangle,  0< p \le 1.\label{eq.mc}
	\end{align}
	Using \eqref{eq3.2}, we have the following refinement
	\begin{align*}
	\left\langle {Ax,x} \right\rangle^p  \ge \left\langle { A^p x,x}
	\right\rangle\ge \left\langle { A^p x,x} \right\rangle-
	\left\langle{\left| {A - \left\langle{A x,x} \right\rangle
			1_{\mathcal{H}} } \right|^px,x} \right\rangle, \qquad 0< p \le 1
	\end{align*}
	for every $x\in \mathcal{H}$.
\end{remark}

The following refinement of Cauchy-Schwarz inequality holds.
\begin{lemma}
	Let $A\in \mathscr{B}\left( \mathscr{H}\right)^+$, then
	\begin{align}
	\label{eq3.3}\left|\left\langle {Ax,y} \right\rangle\right|
	^{2p}&\le   \left[\left\langle { A^p x,x} \right\rangle-
	\left\langle{\left| {A - \left\langle { A x,x} \right\rangle
			1_{\mathcal{H}} } \right|^px,x} \right\rangle
	\right]\\&\qquad\times \left[\left\langle { A^p y,y}
	\right\rangle- \left\langle{\left| {A - \left\langle
			{ A y,y} \right\rangle 1_{\mathcal{H}} } \right|^py,y} \right\rangle \right]\nonumber \\
	&\le\left\langle { A^p x,x} \right\rangle\left\langle { A^p y,y}
	\right\rangle \nonumber
	\end{align}
	for all $p\ge2$ and every $x,y\in \mathcal{H}$.
\end{lemma}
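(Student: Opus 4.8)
The plan is to bootstrap the classical Cauchy--Schwarz inequality for the nonnegative sesquilinear form $(x,y)\mapsto\langle Ax,y\rangle$ by feeding in the refined H\"older--McCarty inequality \eqref{eq3.1} from Lemma~\ref{lemma7}.

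First, since $A\ge0$ the map $(x,y)\mapsto\langle Ax,y\rangle$ is a positive semidefinite sesquilinear form, so the Cauchy--Schwarz inequality gives $\left|\langle Ax,y\rangle\right|^{2}\le\langle Ax,x\rangle\langle Ay,y\rangle$ for all $x,y\in\mathcal H$. Both sides are nonnegative, so raising to the power $p\ge1$ preserves the inequality:
\[
\left|\langle Ax,y\rangle\right|^{2p}\le\langle Ax,x\rangle^{p}\,\langle Ay,y\rangle^{p}.
\]

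Next, for $p\ge2$ I would apply the left-hand inequality in \eqref{eq3.1} once to $x$ and once to $y$, obtaining $\langle Ax,x\rangle^{p}\le\langle A^{p}x,x\rangle-\langle|A-\langle Ax,x\rangle 1_{\mathcal{H}}|^{p}x,x\rangle$ together with its analogue with $y$ in place of $x$. The only point that needs attention --- and the closest thing to an obstacle here --- is that one may legitimately multiply these two inequalities together, which requires all the quantities involved to be nonnegative. For $\langle Ax,x\rangle^{p}$ and $\langle Ay,y\rangle^{p}$ this is clear; for the two bracketed expressions it follows from \eqref{eq3.1} itself, since each is sandwiched between the nonnegative number $\langle Ax,x\rangle^{p}$ (resp.\ $\langle Ay,y\rangle^{p}$) and $\langle A^{p}x,x\rangle$ (resp.\ $\langle A^{p}y,y\rangle$). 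Multiplying the two inequalities and chaining with the previous display yields the first inequality of \eqref{eq3.3}.

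Finally, the second inequality of \eqref{eq3.3} is immediate from the right-hand inequality in \eqref{eq3.1}: each bracketed expression is at most $\langle A^{p}x,x\rangle$ (resp.\ $\langle A^{p}y,y\rangle$), and since both brackets are nonnegative, their product is at most $\langle A^{p}x,x\rangle\langle A^{p}y,y\rangle$. This completes the argument; the restriction $p\ge2$ enters precisely because that is the range in which the superquadratic inequality \eqref{eq3.1} is available, and no deeper difficulty arises beyond the bookkeeping of signs.
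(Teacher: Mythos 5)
Your argument is correct and follows essentially the same route as the paper: Cauchy--Schwarz for the positive form $\langle Ax,y\rangle$, raising to the power $p$, and then invoking Lemma~\ref{lemma7} (inequality \eqref{eq3.1}) on each factor. Your extra remark justifying why the two inequalities may be multiplied (nonnegativity of the bracketed terms, which follows since each is sandwiched between $\langle Ax,x\rangle^{p}\ge 0$ and $\langle A^{p}x,x\rangle$) is a detail the paper leaves implicit, and is a welcome addition.
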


\begin{proof}
	By Cauchy-Schwarz inequality we have
	\begin{align*}
	\left|\left\langle {Ax,y} \right\rangle\right|^{2} \le
	\left\langle {Ax,x} \right\rangle \left\langle {Ay,y}
	\right\rangle
	\end{align*}
	for every $x,y\in \mathcal{H}$, and this implies that
	\begin{align*}
	\left|\left\langle {Ax,y} \right\rangle\right|^{2p} \le
	\left\langle {Ax,x} \right\rangle^p\left\langle {Ay,y}
	\right\rangle^p,\qquad p\ge2.
	\end{align*}
	Employing \eqref{eq3.1} we get the desired result.
\end{proof}

\begin{corollary}
	If $T\in \mathscr{B}\left( \mathscr{H}\right)$, then
	\begin{align}
	\label{eq3.4}\left| {\left\langle {Tx,y} \right\rangle }
	\right|^{2p}  &\le  \left[ \left\langle { |T|^p x,x}
	\right\rangle- \left\langle{\left| {|T| - \left\langle { |T| x,x}
			\right\rangle 1_{\mathcal{H}} } \right|^px,x} \right\rangle\right]
	\\ &\qquad \times\left[ \left\langle { |T^*|^py,y} \right\rangle-
	\left\langle{\left| {|T^*| - \left\langle{ |T^*|y,y} \right\rangle
			1_{\mathcal{H}} } \right|^py,y} \right\rangle\right] \nonumber
	\\
	&\le  \left\langle { |T|^p x,x} \right\rangle  \left\langle {
		|T^*|^py,y} \right\rangle\nonumber
	\end{align}
	for all $p\ge2$. In particular, we have
	\begin{align}
	\left| {\left\langle {Tx,x} \right\rangle } \right|\le  \left[
	\left\langle { |T|^p x,x} \right\rangle- \left\langle{\left| {|T|
			- \left\langle
			{ |T| x,x} \right\rangle 1_{\mathcal{H}} } \right|^px,x} \right\rangle\right]^{1/p}\label{eq3.5}
	\end{align}
\end{corollary}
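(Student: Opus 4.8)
The plan is to feed the generalized mixed Schwarz inequality into the refined H\"older--McCarty estimate \eqref{eq3.1} of Lemma \ref{lemma7}. The first step is to record the mixed Schwarz inequality, valid for an arbitrary $T\in\mathscr{B}\left(\mathscr{H}\right)$, in the symmetric form
\begin{align*}
\left|\left\langle Tx,y\right\rangle\right|^{2}\le \left\langle |T|x,x\right\rangle\left\langle |T^{*}|y,y\right\rangle ,\qquad x,y\in\mathscr{H}.
\end{align*}
This is the special case $B=1_{\mathscr{H}}$, $f(t)=g(t)=t^{1/2}$ of Lemma \ref{lemma5}, where the hypothesis $|T|\,1_{\mathscr{H}}=1_{\mathscr{H}}\,|T|$ is trivial and $r\left(1_{\mathscr{H}}\right)=1$ (for positive $T$ it is already \eqref{eq2.2} with $\alpha=1/2$). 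Since $t\mapsto t^{p}$ is nondecreasing on $[0,\infty)$ for every $p>0$, raising both sides to the power $p\ge 2$ gives
\begin{align*}
\left|\left\langle Tx,y\right\rangle\right|^{2p}\le \left\langle |T|x,x\right\rangle^{p}\left\langle |T^{*}|y,y\right\rangle^{p}.
\end{align*}

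Next I would apply \eqref{eq3.1} twice: to the positive operator $|T|$ with the vector $x$, and to the positive operator $|T^{*}|$ with the vector $y$. This yields
\begin{align*}
\left\langle |T|x,x\right\rangle^{p}\le \left\langle |T|^{p}x,x\right\rangle-\left\langle \left||T|-\left\langle |T|x,x\right\rangle 1_{\mathcal{H}}\right|^{p}x,x\right\rangle\le \left\langle |T|^{p}x,x\right\rangle
\end{align*}
together with the analogous two-sided estimate with $|T^{*}|$ and $y$ in place of $|T|$ and $x$. Every quantity appearing here is nonnegative, the middle term of \eqref{eq3.1} being $\ge\left\langle |T|x,x\right\rangle^{p}\ge 0$, so the two chains may be multiplied together factor by factor; the outcome is exactly \eqref{eq3.4}. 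The inequality \eqref{eq3.5} then follows by putting $y=x$ in \eqref{eq3.4} and extracting a root, which bounds $\left|\left\langle Tx,x\right\rangle\right|$ by the refined quantity attached to $|T|$ (and $|T^{*}|$).

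The argument is mostly bookkeeping, so there is no genuinely hard step once \eqref{eq3.1} is in hand. The two places that call for a little care are: first, invoking the mixed Schwarz inequality in a form valid for operators that are not assumed positive, which is why I route it through Lemma \ref{lemma5} with $B=1_{\mathscr{H}}$ rather than through the positive-operator statement of Lemma \ref{lemma3}; and second, verifying that every factor is nonnegative before multiplying the two refined chains, so that the \emph{middle} expressions (not merely the outer ones) survive the product and \eqref{eq3.4} comes out with its sharp middle term intact. Both points are immediate consequences of \eqref{eq3.1}.
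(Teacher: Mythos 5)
Your proof of the chain \eqref{eq3.4} is correct and is essentially the paper's own argument: both start from the mixed Schwarz inequality $\left|\left\langle Tx,y\right\rangle\right|^{2}\le\left\langle |T|x,x\right\rangle\left\langle |T^{*}|y,y\right\rangle$ (the paper deduces it from the positivity of the block operator $\left[\begin{smallmatrix}|T|&T^{*}\\ T&|T^{*}|\end{smallmatrix}\right]$ via the criterion of Lemma \ref{lemma4}, while you deduce it from Lemma \ref{lemma5} with $B=1_{\mathscr{H}}$ and $f(t)=g(t)=t^{1/2}$ --- the same inequality either way), then raise it to the power $p$ and apply \eqref{eq3.1} to each factor. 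Your explicit remark that the middle terms of \eqref{eq3.1} are nonnegative, so that the two chains may be multiplied termwise, is a point the paper leaves implicit.

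The final step, however, has a genuine gap. Putting $y=x$ in \eqref{eq3.4} and taking the $2p$-th root yields
\begin{align*}
\left|\left\langle Tx,x\right\rangle\right|&\le\left[\left\langle |T|^{p}x,x\right\rangle-\left\langle \left||T|-\left\langle |T|x,x\right\rangle 1_{\mathcal{H}}\right|^{p}x,x\right\rangle\right]^{\frac{1}{2p}}\\
&\qquad\times\left[\left\langle |T^{*}|^{p}x,x\right\rangle-\left\langle \left||T^{*}|-\left\langle |T^{*}|x,x\right\rangle 1_{\mathcal{H}}\right|^{p}x,x\right\rangle\right]^{\frac{1}{2p}},
\end{align*}
in which each bracket carries the exponent $\frac{1}{2p}$ and the $|T^{*}|$--bracket cannot simply be dropped; to pass from this to \eqref{eq3.5} one would need the $|T^{*}|$--bracket to be dominated by the $|T|$--bracket, which is false in general. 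In fact \eqref{eq3.5} itself fails: take $T=\left(\begin{smallmatrix}1&1\\0&0\end{smallmatrix}\right)$ and the unit vector $x=(1,0)^{T}$, so that $\left\langle Tx,x\right\rangle=1$ and $|T|=\tfrac{1}{\sqrt{2}}\left(\begin{smallmatrix}1&1\\1&1\end{smallmatrix}\right)$ with $\left\langle |T|x,x\right\rangle=\tfrac{1}{\sqrt{2}}$; for $p=2$ one computes $\left\langle |T|^{2}x,x\right\rangle=1$ and $\left\langle \left||T|-\tfrac{1}{\sqrt{2}}\,1_{\mathcal{H}}\right|^{2}x,x\right\rangle=\tfrac{1}{2}$, so the right-hand side of \eqref{eq3.5} equals $\left(\tfrac{1}{2}\right)^{1/2}<1$. (The paper's own proof also stops at \eqref{eq3.4} and never addresses \eqref{eq3.5}.) The correct ``particular case'' of \eqref{eq3.4} at $y=x$ is therefore the symmetric bound displayed above, not \eqref{eq3.5} as written.
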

\begin{proof}
	Recall that, if $T\in \mathscr{B}\left( \mathscr{H}\right)$, then
	$\left[ {\begin{array}{*{20}c} |T| & {T^* }  \\T & |T^*|
		\\\end{array}} \right]$  is positive in
	$\mathscr{B}\left(\mathscr{H}\oplus \mathscr{H}\right)$, (see
	\cite{FK4}). Therefore, by \eqref{eq2.1} we have
	\begin{align*}
	\left| {\left\langle {Tx,y} \right\rangle } \right|^{2}  \le
	\left\langle {|T|x,x} \right\rangle \left\langle {|T^*|y,y}
	\right\rangle
	\end{align*}
	and this gives by \eqref{eq3.1} that;
	\begin{align*}
	\left| {\left\langle {Tx,y} \right\rangle } \right|^{2p}  &\le  \left\langle {|T|x,x} \right\rangle^{p} \left\langle {|T^*|y,y} \right\rangle^{p}  \\
	&\le \left[ \left\langle { |T|^p x,x} \right\rangle-
	\left\langle{\left| {|T| - \left\langle { |T| x,x} \right\rangle
			1_{\mathcal{H}} } \right|^px,x} \right\rangle\right]
	\\&\qquad\times\left[ \left\langle { |T^*|^py,y} \right\rangle-
	\left\langle{\left| {|T^*| - \left\langle{ |T^*|y,y} \right\rangle
			1_{\mathcal{H}} } \right|^py,y} \right\rangle\right]
	\\
	&\le  \left\langle { |T|^p x,x} \right\rangle  \left\langle {
		|T^*|^py,y} \right\rangle\nonumber
	\end{align*}
	as desired.
\end{proof}

A generalization of the above result in Kittaneh like inequality
\eqref{kittaneh.ineq} is considered in the following result.
\begin{corollary}
	Let  $T,S\in \mathscr{B}\left( \mathscr{H}\right)$ such that
	$|T|S=S^*|T|$. If $f$ and $g$ are nonnegative continuous functions
	on $\left[0,\infty\right)$ satisfying $f(t)g(t) =t$ $(t\ge0)$,
	then
	\begin{align}
	\label{eq3.6}&\left| {\left\langle {TSx,y} \right\rangle } \right|
	\\&\le r\left( S \right) \sqrt[{2p}]{\left\langle { {f^{2p} \left(
				{\left| T \right|} \right)} x,x} \right\rangle-
		\left\langle{\left| {{f^2 \left( {\left| T \right|} \right)} -
				\left\langle
				{ {f^2 \left( {\left| T \right|} \right)} x,x} \right\rangle 1_{\mathcal{H}} } \right|^px,x} \right\rangle}\nonumber\\
	&\qquad\times \sqrt[{2p}]{\left\langle { {g^{2p} \left( {\left|
					T^* \right|} \right)} y,y} \right\rangle- \left\langle{\left|
			{{g^2 \left( {\left| T^* \right|} \right)} - \left\langle
				{ {g^2 \left( {\left| T^* \right|} \right)} y,y} \right\rangle 1_{\mathcal{H}} } \right|^py,y} \right\rangle}\nonumber\\
	&\le r\left( S \right) \sqrt[{2p}]{\left\langle { {f^{2p} \left( {\left| T \right|} \right)} x,x} \right\rangle }
	\sqrt[{2p}]{\left\langle { {g^{2p} \left( {\left| T^* \right|}
				\right)} y,y} \right\rangle }\nonumber
	\end{align}
	for all $p\ge2$ and any   vectors $x,y\in  \mathscr{H} $,
\end{corollary}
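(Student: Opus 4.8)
The plan is to bootstrap Kittaneh's generalized mixed Schwarz inequality (Lemma \ref{lemma5}) by feeding the positive operators $f^{2}(|T|)$ and $g^{2}(|T^{*}|)$ into the operator H\"older--McCarty refinement of Lemma \ref{lemma7}. First I would invoke Lemma \ref{lemma5}: under the hypotheses $|T|S=S^{*}|T|$ and $f(t)g(t)=t$, for all $x,y\in\mathscr{H}$,
\begin{align*}
\left| {\left\langle {TSx,y} \right\rangle } \right| \le r\left(S\right)\left\| {f\left( {\left| T \right|} \right)x} \right\|\left\| {g\left( {\left| {T^{*}} \right|} \right)y} \right\|.
\end{align*}
Since $f,g$ are real-valued and $|T|,|T^{*}|$ are selfadjoint, $f(|T|)$ and $g(|T^{*}|)$ are selfadjoint, and the continuous functional calculus gives $\left\|f(|T|)x\right\|^{2}=\left\langle f(|T|)^{2}x,x\right\rangle=\left\langle f^{2}(|T|)x,x\right\rangle$ and likewise $\left\|g(|T^{*}|)y\right\|^{2}=\left\langle g^{2}(|T^{*}|)y,y\right\rangle$. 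Raising the displayed inequality to the power $2p$ then yields
\begin{align*}
\left| {\left\langle {TSx,y} \right\rangle } \right|^{2p} \le r\left(S\right)^{2p}\left\langle f^{2}\left( {\left| T \right|} \right)x,x\right\rangle^{p}\left\langle g^{2}\left( {\left| {T^{*}} \right|} \right)y,y\right\rangle^{p}.
\end{align*}

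Next I would apply inequality \eqref{eq3.1} of Lemma \ref{lemma7}, which is valid for $p\ge 2$ and positive operators, to $A:=f^{2}(|T|)$ with the vector $x$ and, separately, to $B:=g^{2}(|T^{*}|)$ with the vector $y$:
\begin{align*}
\left\langle Ax,x\right\rangle^{p} \le \left\langle A^{p}x,x\right\rangle-\left\langle\left| {A-\left\langle Ax,x\right\rangle 1_{\mathscr{H}}} \right|^{p}x,x\right\rangle \le \left\langle A^{p}x,x\right\rangle,
\end{align*}
and the analogous two-step chain for $B$ and $y$. Once more by the functional calculus, $A^{p}=\bigl(f^{2}(|T|)\bigr)^{p}=f^{2p}(|T|)$ and $B^{p}=g^{2p}(|T^{*}|)$ (these are compositions of continuous functions on the spectrum, legitimate since $f,g\ge 0$). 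Substituting both chains into the previous estimate and taking the $2p$-th root produces exactly \eqref{eq3.6}, the right-most upper bound coming from the trivial halves $\left\langle A^{p}x,x\right\rangle$ and $\left\langle B^{p}y,y\right\rangle$.

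The whole argument is essentially assembly of known pieces; there is no genuine obstacle. The only points that need care are the functional-calculus identifications ($f(|T|)^{2}=f^{2}(|T|)$ and $(f^{2}(|T|))^{p}=f^{2p}(|T|)$) and the observation that Lemma \ref{lemma7} must be applied to a \emph{positive} operator, which $f^{2}(|T|)$ and $g^{2}(|T^{*}|)$ are, being squares of selfadjoint operators. Note in particular that applying Lemma \ref{lemma7} to each factor separately avoids any need for a positivity or commutativity relation between $f^{2}(|T|)$ and $g^{2}(|T^{*}|)$.
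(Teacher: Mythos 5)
Your argument is correct and follows essentially the same route as the paper: start from Kittaneh's generalized mixed Schwarz inequality (Lemma \ref{lemma5}), rewrite the norms as $\left\langle f^{2}(|T|)x,x\right\rangle^{1/2}$ and $\left\langle g^{2}(|T^{*}|)y,y\right\rangle^{1/2}$, and then apply the superquadratic refinement \eqref{eq3.1} of Lemma \ref{lemma7} to each positive factor separately before taking the $2p$-th root. Your explicit attention to the functional-calculus identities $f(|T|)^{2}=f^{2}(|T|)$ and $\bigl(f^{2}(|T|)\bigr)^{p}=f^{2p}(|T|)$ is a welcome touch of rigor that the paper leaves implicit, but it does not change the substance of the proof.
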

\begin{proof}
	Using \eqref{kittaneh.ineq}, and by employing \eqref{eq3.3} we
	have
	\begin{align*}
	\left| {\left\langle {TSx,y} \right\rangle } \right| & \le r\left(S\right)\left\| {f\left( {\left| T \right|} \right)x} \right\|\left\| {g\left( {\left| {T^* } \right|} \right)y} \right\|\\
	&=r\left( S \right)\left\langle {f^2 \left( {\left| T \right|} \right)x,x} \right\rangle ^{1/2} \left\langle {g^2 \left( {\left| {T^* } \right|} \right)y,y} \right\rangle ^{1/2} \\
	&\le r\left( S \right) \sqrt[{2p}]{\left\langle { {f^{2p} \left(
				{\left| T \right|} \right)} x,x} \right\rangle-
		\left\langle{\left| {{f^2 \left( {\left| T \right|} \right)} -
				\left\langle
				{ {f^2 \left( {\left| T \right|} \right)} x,x} \right\rangle 1_{\mathcal{H}} } \right|^px,x} \right\rangle}\\
	&\qquad\times \sqrt[{2p}]{\left\langle { {g^{2p} \left( {\left|
					T^* \right|} \right)} y,y} \right\rangle- \left\langle{\left|
			{{g^2 \left( {\left| T^* \right|} \right)} - \left\langle { {g^2
						\left( {\left| T^* \right|} \right)} y,y} \right\rangle
				1_{\mathcal{H}} } \right|^py,y} \right\rangle}\\
	&\le r\left( S \right) \sqrt[{2p}]{\left\langle { {f^{2p} \left( {\left| T \right|} \right)} x,x} \right\rangle }
	\sqrt[{2p}]{\left\langle { {g^{2p} \left( {\left| T^* \right|}
				\right)} y,y} \right\rangle }\nonumber
	\end{align*}
	which proves the result.
\end{proof}

\begin{corollary}
	Let  $T,S\in \mathscr{B}\left( \mathscr{H}\right)$ such that
	$|T|S=S^*|T|$. Then
	\begin{align}
	\label{eq3.7}  &\left| {\left\langle {TSx,y} \right\rangle } \right|\\ &\le r\left( S \right) \sqrt[{2p}]{\left\langle { {   \left| T \right|^{2p\alpha } } x,x} \right\rangle- \left\langle{\left| {{  \left| T \right|^{2\alpha} } - \left\langle    { {\left| T \right|^{2\alpha}} x,x} \right\rangle 1_{\mathcal{H}} } \right|^{p}x,x} \right\rangle}\nonumber\\
	&\qquad\times \sqrt[{2p}]{\left\langle { {   \left| T^* \right|^{2p\left(1-\alpha\right) } } x,x} \right\rangle- \left\langle{\left| {{  \left| T^* \right|^{2\left(1-\alpha\right)} } - \left\langle   { {\left| T^* \right|^{2\left(1-\alpha\right)}} x,x} \right\rangle 1_{\mathcal{H}} } \right|^{p}x,x} \right\rangle}\nonumber
	\\
	&\le r\left( S \right) \sqrt[{2p}]{\left\langle { {   \left| T \right|^{2p\alpha } } x,x} \right\rangle }
	\sqrt[{2p}]{\left\langle { {   \left| T^* \right|^{2p\left(1-\alpha\right) } } x,x} \right\rangle }\nonumber
	\end{align}
	for all $p\ge2$  and any   vectors $x,y\in  \mathscr{H} $. In particular, we have
	\begin{align}
	\label{eq3.8}&\left| {\left\langle {TSx,y} \right\rangle } \right| \nonumber\\&\le r\left( S \right) \sqrt[{4}]{\left\langle { {   \left| T \right|^{2 } } x,x} \right\rangle- \left\langle{\left| {{  \left| T \right| } - \left\langle   { {\left| T \right|} x,x} \right\rangle 1_{\mathcal{H}} } \right|^{2}x,x} \right\rangle}\\
	&\qquad\times \sqrt[{4}]{\left\langle { {   \left| T^* \right|^{2  } } x,x} \right\rangle- \left\langle{\left| {{  \left| T^* \right|  } - \left\langle { {\left| T^* \right| } x,x} \right\rangle 1_{\mathcal{H}} } \right|^{2}x,x} \right\rangle}\nonumber
	\\
	&\le r\left( S \right) \sqrt[{4}]{\left\langle { {   \left| T \right|^{2 } } x,x} \right\rangle}\sqrt[{4}]{\left\langle { {   \left| T^* \right|^{2 } } x,x} \right\rangle}.\nonumber
	\end{align}
\end{corollary}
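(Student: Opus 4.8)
The plan is to obtain \eqref{eq3.7} as the specialization of the preceding corollary \eqref{eq3.6} to the pair of power functions $f(t)=t^{\alpha}$ and $g(t)=t^{1-\alpha}$, and then to read \eqref{eq3.8} off from \eqref{eq3.7} by putting $\alpha=\tfrac12$ and $p=2$. So the argument is essentially a substitution, organized around the continuous functional calculus applied to the positive operators $|T|$ and $|T^{*}|$.

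First I would check that the hypotheses of the corollary containing \eqref{eq3.6} are in force. The operator identity $|T|S=S^{*}|T|$ is assumed verbatim, so nothing is needed there. For $0\le\alpha\le 1$ the functions $f(t)=t^{\alpha}$ and $g(t)=t^{1-\alpha}$ are nonnegative and continuous on $[0,\infty)$ (the endpoint cases $\alpha\in\{0,1\}$ giving the constant function $1$, which is still admissible), and they satisfy $f(t)g(t)=t^{\alpha}t^{1-\alpha}=t$ for all $t\ge 0$; hence \eqref{eq3.6} applies with this choice. Next I would invoke the functional calculus to simplify the four operators occurring in \eqref{eq3.6}: since $|T|,|T^{*}|\ge 0$ we have $f^{2p}(|T|)=|T|^{2p\alpha}$, $f^{2}(|T|)=|T|^{2\alpha}$, $g^{2p}(|T^{*}|)=|T^{*}|^{2p(1-\alpha)}$ and $g^{2}(|T^{*}|)=|T^{*}|^{2(1-\alpha)}$. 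Substituting these identities into \eqref{eq3.6} term by term, including the outer bound $\le r(S)\,\sqrt[2p]{\langle |T|^{2p\alpha}x,x\rangle}\,\sqrt[2p]{\langle |T^{*}|^{2p(1-\alpha)}y,y\rangle}$, produces \eqref{eq3.7} in the form displayed in the statement. Alternatively, one can bypass \eqref{eq3.6} and trace the chain directly from \eqref{kittaneh.ineq} (which gives $|\langle TSx,y\rangle|\le r(S)\langle |T|^{2\alpha}x,x\rangle^{1/2}\langle |T^{*}|^{2(1-\alpha)}y,y\rangle^{1/2}$) followed by \eqref{eq3.3} applied with $A=|T|^{2\alpha}$ and $A=|T^{*}|^{2(1-\alpha)}$, which is legitimate since $p\ge 2$.

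Finally, for \eqref{eq3.8} I would set $\alpha=\tfrac12$ and $p=2$ in \eqref{eq3.7}: then $2p\alpha=2$, $2\alpha=1$, $2p(1-\alpha)=2$, $2(1-\alpha)=1$ and $2p=4$, so each root $\sqrt[2p]{\,\cdot\,}$ becomes $\sqrt[4]{\,\cdot\,}$ and \eqref{eq3.7} collapses to \eqref{eq3.8}. I do not expect a genuine obstacle here, since the result is a direct corollary; the only points deserving care are the admissibility of the endpoint exponents $\alpha\in\{0,1\}$ in the functional calculus and keeping the bookkeeping of the exponents $2p\alpha$, $2\alpha$, $2p(1-\alpha)$, $2(1-\alpha)$ consistent when matching \eqref{eq3.6} against the displayed form of \eqref{eq3.7}.
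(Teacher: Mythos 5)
Your proof is correct and is essentially the paper's own argument: the paper likewise obtains \eqref{eq3.7} by setting $f(t)=t^{\alpha}$, $g(t)=t^{1-\alpha}$ in \eqref{eq3.6}, and then gets \eqref{eq3.8} by taking $p=2$ and $\alpha=\tfrac12$. The only discrepancy is cosmetic: the displayed \eqref{eq3.7} evaluates both factors at $x$ rather than at $x$ and $y$, which is an inconsistency in the paper's statement rather than in your substitution.
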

\begin{proof}
	Setting $f\left(t\right)=t^{\alpha}$ and
	$g\left(t\right)=t^{1-\alpha}$ in \eqref{eq3.6} we get the
	inequality \eqref{eq3.7}. Choosing $p=2$ and $\alpha=\frac{1}{2}$
	in \eqref{eq3.7}, we get the second inequality \eqref{eq3.8}.
\end{proof}

%/=\=/=\=/=\=/=\=/=\=/=\=/=\=/=\=/=\=/=\=/=\=/=\=/=\=/=\=/=\=/=\=/=\=/=\=/=\=/=\=/=\=/=\=/=\=/=\=/=\=/=\=/=\=/=\=/=\=/=\=/=\=/=\=/=\=/=\=/=\=/=\=/=\=/=\=
\section{Numerical radius inequalities}\label{sec4}
%/=\=/=\=/=\=/=\=/=\=/=\=/=\=/=\=/=\=/=\=/=\=/=\=/=\=/=\=/=\=/=\=/=\=/=\=/=\=/=\=/=\=/=\=/=\=/=\=/=\=/=\=/=\=/=\=/=\=/=\=/=\=/=\=/=\=/=\=/=\=/=\=/=\=/=\=

This section is divided into two parts; the first part concerning
numerical inequalities for general Hilbert space operators. The
second part deals with  Numerical radius inequalities   for $n
\times n$ matrix Operators.

\subsection{ Numerical radius inequalities}
In this section, some numerical radius inequalities based on
results of Section \ref{sec2} are obtained. Before  that, we need
to recall that in some recent works, some authors used the concept
of infimum norm (or $\ell$-norm) which is defined as:
\begin{align*}
\ell \left( T \right): &= \inf \left\{ {\left\| {Tx} \right\|:x
	\in \mathscr{H},\left\| x \right\| = 1} \right\}
\\
&=      \inf \left\{ {\left\langle {Tx,y} \right\rangle :x,y \in
	\mathscr{H},\left\| x \right\| =\left\| y \right\|= 1} \right\}.
\end{align*}

The next result gives a numerical radius bound of product of two
operators based on the refinement of Kittaneh inequality
\eqref{eq3.6}.
\begin{theorem}
	\label{thm2}Let  $T,S\in \mathscr{B}\left( \mathscr{H}\right)$
	such that $|T|S=S^*|T|$. If $f$ and $g$ are nonnegative continuous
	functions on $\left[0,\infty\right)$ satisfying $f(t)g(t) =t$
	$(t\ge0)$, then
	\begin{align}
	\label{eq4.1} w\left(TS\right)   &\le  \frac{1}{2}\left( {\left\|
		S \right\| + \left\| {S^2 } \right\|^{1/2} } \right)  \cdot
	\left[\left\| {f^p \left( {\left| T \right|} \right)} \right\|^2  - \ell   \left( {\left| {\left[ {f^2 \left( {\left| T \right|} \right) - \left\| {f\left( {\left| T \right|} \right)} \right\|^2} \right]} \right|^{p } } \right)\right]^{\frac{1}{2p}} \\
	&\qquad \times\left[\left\| {g^p \left( {\left| T^* \right|}
		\right)} \right\|^2  - \ell  \left( {\left| {\left[ {g^2 \left(
				{\left| T^* \right|} \right) - \left\| {g\left( {\left| T^*
						\right|} \right)} \right\|^2} \right]} \right|^{p} }
	\right)\right]^{\frac{1}{2p}} \nonumber
	\end{align}
	for all $p\ge2$.
\end{theorem}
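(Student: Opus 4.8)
The plan is to feed the refined generalized mixed Schwarz inequality \eqref{eq3.6} into the definition $w(TS)=\sup_{\|x\|=1}\left|\left\langle TSx,x\right\rangle\right|$ by taking the two vectors in \eqref{eq3.6} to be equal. First I would set $y=x$ with $\|x\|=1$; abbreviating
\begin{align*}
\Phi_f(x)&=\left\langle f^{2p}(|T|)x,x\right\rangle-\left\langle\left|f^{2}(|T|)-\left\langle f^{2}(|T|)x,x\right\rangle 1_{\mathcal H}\right|^{p}x,x\right\rangle,\\
\Phi_g(x)&=\left\langle g^{2p}(|T^{*}|)x,x\right\rangle-\left\langle\left|g^{2}(|T^{*}|)-\left\langle g^{2}(|T^{*}|)x,x\right\rangle 1_{\mathcal H}\right|^{p}x,x\right\rangle,
\end{align*}
inequality \eqref{eq3.6} becomes $\left|\left\langle TSx,x\right\rangle\right|\le r(S)\,\Phi_f(x)^{\frac{1}{2p}}\Phi_g(x)^{\frac{1}{2p}}$ for every unit vector $x$; note that $\Phi_f(x),\Phi_g(x)\ge 0$, since applying \eqref{eq3.1} to the positive operators $f^{2}(|T|)$ and $g^{2}(|T^{*}|)$ (with exponent $p\ge 2$) gives $\Phi_f(x)\ge\left\langle f^{2}(|T|)x,x\right\rangle^{p}\ge0$, and similarly for $\Phi_g$. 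Taking the supremum over $\|x\|=1$ turns the left-hand side into $w(TS)$, so the task reduces to bounding the right-hand side uniformly in $x$.

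The second step bounds the two factors. Since $f,g\ge 0$, the operators $f^{2p}(|T|)=\bigl(f^{p}(|T|)\bigr)^{2}$ and $g^{2p}(|T^{*}|)$ are positive, hence $\left\langle f^{2p}(|T|)x,x\right\rangle\le\|f^{2p}(|T|)\|=\|f^{p}(|T|)\|^{2}$ and likewise for $g$. For the subtracted ``variance-type'' terms I would use that, for a self-adjoint $B$ and $\|x\|=1$,
\begin{align*}
\left\langle|B|^{p}x,x\right\rangle=\bigl\||B|^{p/2}x\bigr\|^{2}\ge\ell\bigl(|B|^{p/2}\bigr)^{2}=\ell\bigl(|B|^{p}\bigr),
\end{align*}
the last equality being the functional-calculus identity $\ell(C)^{q}=\ell(C^{q})$ for $C\ge 0$. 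Applying this with $B=f^{2}(|T|)-\left\langle f^{2}(|T|)x,x\right\rangle 1_{\mathcal H}$ and then replacing the $x$-dependent centring scalar $\left\langle f^{2}(|T|)x,x\right\rangle$ by $\|f(|T|)\|^{2}$—legitimate because $0\le f^{2}(|T|)\le\|f(|T|)\|^{2}1_{\mathcal H}$—yields, uniformly in $x$, the bound $\left\langle|B|^{p}x,x\right\rangle\ge\ell\bigl(\bigl|f^{2}(|T|)-\|f(|T|)\|^{2}1_{\mathcal H}\bigr|^{p}\bigr)$, and symmetrically for $\Phi_g$. Combining, $\sup_{\|x\|=1}\Phi_f(x)\le\|f^{p}(|T|)\|^{2}-\ell\bigl(\bigl|f^{2}(|T|)-\|f(|T|)\|^{2}1_{\mathcal H}\bigr|^{p}\bigr)$, and analogously for $g$.

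Finally I would assemble the pieces: since $\Phi_f,\Phi_g\ge 0$ and $t\mapsto t^{1/(2p)}$ is nondecreasing, $\sup_{\|x\|=1}\bigl(\Phi_f(x)^{\frac1{2p}}\Phi_g(x)^{\frac1{2p}}\bigr)\le\bigl(\sup_{\|x\|=1}\Phi_f(x)\bigr)^{\frac1{2p}}\bigl(\sup_{\|x\|=1}\Phi_g(x)\bigr)^{\frac1{2p}}$, and the spectral-radius factor is absorbed via $r(S)\le w(S)\le\frac12\bigl(\|S\|+\|S^{2}\|^{1/2}\bigr)$, the second step being Kittaneh's inequality \eqref{eq1.2}. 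Threading these together gives precisely \eqref{eq4.1}. The step I expect to be the main obstacle is the uniform lower bound on the variance-type terms $\left\langle\bigl|f^{2}(|T|)-\left\langle f^{2}(|T|)x,x\right\rangle 1_{\mathcal H}\bigr|^{p}x,x\right\rangle$: because the centring scalar genuinely depends on $x$, one has to argue carefully—through the definition of the $\ell$-norm, the identity $\ell(C^{p})=\ell(C)^{p}$, and the order relation $f^{2}(|T|)\le\|f(|T|)\|^{2}1_{\mathcal H}$—that it dominates the corresponding quantity with the fixed centre $\|f(|T|)\|^{2}$. The remainder is a routine assembly of \eqref{eq3.6}, \eqref{eq1.2}, the positivity of $f^{2p}(|T|),g^{2p}(|T^{*}|)$, and the elementary sub-multiplicativity of the supremum.
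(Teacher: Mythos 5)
Your route is the paper's route almost line for line: put $y=x$ in \eqref{eq3.6}, use \eqref{eq3.1} to see that the two bracketed factors are nonnegative so that the supremum of their product splits into a product of suprema, bound the leading terms by $\|f^{p}(|T|)\|^{2}$ and $\|g^{p}(|T^{*}|)\|^{2}$, and absorb the spectral radius via $r(S)\le\frac12\left(\|S\|+\|S^{2}\|^{1/2}\right)$ (the paper derives this from Lemma \ref{lemma6} with $B=1_{\mathscr H}$ rather than from $r(S)\le w(S)$ and \eqref{eq1.2}, but the bound is identical). So the architecture matches.

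The step you yourself single out as the obstacle is, however, not established by the reason you give. The order relation $0\le f^{2}(|T|)\le\|f(|T|)\|^{2}1_{\mathscr H}$ does \emph{not} imply the operator inequality $\left|f^{2}(|T|)-\lambda 1_{\mathscr H}\right|^{p}\ge\left|f^{2}(|T|)-\|f(|T|)\|^{2}1_{\mathscr H}\right|^{p}$ for $0\le\lambda\le\|f(|T|)\|^{2}$, which is what monotonicity of $\ell$ on positive operators would require: if $\spe\left(f^{2}(|T|)\right)=\{0,1\}$, then at the spectral point $t=0$ one has $|0-\tfrac12|^{p}<|0-1|^{p}$, so ``replacing the centring scalar'' is not monotone in the operator order. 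What actually saves the conclusion is something you do not say: for a positive operator $C$ one has $\|C\|\in\spe(C)$, hence $\ell\left(\left|C-\|C\|1_{\mathscr H}\right|^{p}\right)=\operatorname{dist}\left(\|C\|,\spe(C)\right)^{p}=0$, so the inequality $\ell\left(|B_{x}|^{p}\right)\ge\ell\left(\left|f^{2}(|T|)-\|f(|T|)\|^{2}1_{\mathscr H}\right|^{p}\right)$ holds because its right-hand side vanishes. This makes \eqref{eq4.1} true but also makes its subtracted $\ell$-terms identically zero, i.e.\ the ``refinement'' collapses to $w(TS)\le\frac12\left(\|S\|+\|S^{2}\|^{1/2}\right)\|f^{p}(|T|)\|^{1/p}\|g^{p}(|T^{*}|)\|^{1/p}$. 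The paper's own proof performs exactly the same unjustified interchange (it slips a $\sup_{x}$ inside the modulus underneath an $\inf_{x}$), so your proposal reproduces the published argument, gap included; to close it you must either prove the monotonicity you assert (you cannot) or invoke the vacuity observation above.
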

\begin{proof}
	From the first inequality in \eqref{eq3.6}, we have
	\begin{align*}
	&\left| {\left\langle {TSx,y} \right\rangle } \right|^{2p} \\
	&\le r^{2p}\left( S \right) \left[ {\left\langle { {f^{2p} \left( {\left| T \right|} \right)} x,x} \right\rangle- \left\langle{\left| {{f^2 \left( {\left| T \right|} \right)} - \left\langle{ {f^2 \left( {\left| T \right|} \right)} x,x} \right\rangle 1_{\mathcal{H}} } \right|^px,x} \right\rangle}\right]\\
	&\qquad\times \left[ {\left\langle { {g^{2p} \left( {\left| T^*
					\right|} \right)} y,y} \right\rangle- \left\langle{\left| {{g^2
					\left( {\left| T^* \right|} \right)} - \left\langle{ {g^2 \left(
						{\left| T^* \right|} \right)} y,y} \right\rangle 1_{\mathcal{H}} }
			\right|^py,y} \right\rangle}\right]
	\end{align*}
	Let $y=x$ and taking the supremum over $x \in \mathscr{H}$, we
	observe  that
	\begin{align*}
	&\sup_{\|x\|=1}\left| {\left\langle {TSx,x} \right\rangle } \right|^{2p}  \\&\le r^{2p}\left( S \right)\sup_{\|x\|=1} \left\{  \left[ {\left\langle { {f^{2p} \left( {\left| T \right|} \right)} x,x} \right\rangle- \left\langle{\left| {{f^2 \left( {\left| T \right|} \right)} - \left\langle{ {f^2 \left( {\left| T \right|} \right)} x,x} \right\rangle 1_{\mathcal{H}} } \right|^px,x} \right\rangle}\right]\right.\\
	&\qquad\left. \times \left[ {\left\langle { {g^{2p} \left( {\left|
					T^* \right|} \right)} x,x} \right\rangle- \left\langle{\left|
			{{g^2 \left( {\left| T^* \right|} \right)} - \left\langle{ {g^2
						\left( {\left| T^* \right|} \right)} x,x} \right\rangle
				1_{\mathcal{H}} } \right|^px,x} \right\rangle}\right]\right\}
	\\
	&\le r^{2p}\left( S \right)\sup_{\|x\|=1}  \left[ {\left\langle { {f^{2p} \left( {\left| T \right|} \right)} x,x} \right\rangle- \left\langle{\left| {{f^2 \left( {\left| T \right|} \right)} - \left\langle{ {f^2 \left( {\left| T \right|} \right)} x,x} \right\rangle 1_{\mathcal{H}} } \right|^px,x} \right\rangle}\right]\\
	&\qquad \times \sup_{\|x\|=1}\left[ {\left\langle { {g^{2p} \left(
				{\left| T^* \right|} \right)} x,x} \right\rangle-
		\left\langle{\left| {{g^2 \left( {\left| T^* \right|} \right)} -
				\left\langle{ {g^2 \left( {\left| T^* \right|} \right)} x,x}
				\right\rangle 1_{\mathcal{H}} } \right|^px,x}
		\right\rangle}\right]
\\
	&\le r^{2p}\left( S \right) \left\{  \sup_{\|x\|=1} \left\langle {
		{f^{2p} \left( {\left| T \right|} \right)} x,x}
	\right\rangle\right.
	\\
	&\qquad\qquad\left.- \inf_{\|x\|=1}\left\langle{\left| {{f^2
				\left( {\left| T \right|} \right)} - \sup_{\|x\|=1}\left\langle{
				{f^2 \left( {\left| T \right|} \right)} x,x} \right\rangle
			1_{\mathcal{H}} } \right|^px,x} \right\rangle \right\}
	\\
	&\qquad  \times  \left\{\sup_{\|x\|=1}\left\langle { {g^{2p}
			\left( {\left| T^* \right|} \right)} x,x}
	\right\rangle\right.\\&\qquad\qquad\left.-
	\inf_{\|x\|=1}\left\langle{\left| {{g^2 \left( {\left| T^*
					\right|} \right)} -\sup_{\|x\|=1} \left\langle{ {g^2 \left(
					{\left| T^* \right|} \right)} x,x} \right\rangle 1_{\mathcal{H}} }
		\right|^px,x} \right\rangle \right\}
	\\
	&\le r^{2p}\left( S \right) \cdot
	\left[\left\| {f^p \left( {\left| T \right|} \right)} \right\|^2  - \ell   \left( {\left| {\left[ {f^2 \left( {\left| T \right|} \right) - \left\| {f\left( {\left| T \right|} \right)} \right\|^2} \right]} \right|^{p } } \right)\right]\\
	&\qquad \times\left[\left\| {g^p \left( {\left| T^* \right|}
		\right)} \right\|^2  - \ell  \left( {\left| {\left[ {g^2 \left(
				{\left| T^* \right|} \right) - \left\| {g\left( {\left| T^*
						\right|} \right)} \right\|^2} \right]} \right|^{p} }
	\right)\right].
	\end{align*}
	Now, from Lemma \ref{lemma6} with $A=S$, $B=1_{\mathscr{H}}$, we
	have
	\begin{align*}
	r\left( S \right) \le \frac{1}{4}\left( {2\left\| S \right\| +
		\sqrt {4\min \left\{ {\left\| {S^2 } \right\|,\left\| S \right\|^2
			} \right\}} } \right) = \frac{1}{2}\left( {\left\| S \right\| +
		\left\| {S^2 } \right\|^{1/2} } \right).
	\end{align*}
	Substituting in the above inequality we obtain the result in
	\eqref{eq4.1}.

\end{proof}
\begin{corollary}
	Let  $T,S\in \mathscr{B}\left( \mathscr{H}\right)$ such that
	$|T|S=S^*|T|$. Then
	\begin{align}
	\label{eq4.2} w\left(TS\right)  &\le r\left( S \right) \cdot
	\left[\left\| { \left( {\left| T \right|} \right)^{p\alpha}} \right\|^2  - \ell ^2 \left( {\left| {\left[ { \left( {\left| T \right|} \right)^{2\alpha} - \left\| {\left( {\left| T \right|} \right)^{ \alpha}} \right\|} \right]} \right|^{\frac{p}{2}} } \right)\right]^{\frac{1}{2p}}  \\
	&\qquad \times\left[\left\| {  \left( {\left| T^* \right|}
		\right)^{p\left(1-\alpha\right)}} \right\|^2  - \ell ^2 \left(
	{\left| {\left[ {  \left( {\left| T^* \right|} \right)^{
					2\left(1-\alpha\right)} - \left\| { \left( {\left| T^* \right|}
					\right)^{ \left(1-\alpha\right)}} \right\|} \right]}
		\right|^{\frac{p}{2}} } \right)\right]^{\frac{1}{2p}} \nonumber
	\\
	&\le  r\left( S \right) \cdot \left\| { \left( {\left| T \right|}
		\right)^{p\alpha}} \right\|^{1/p}\left\| {  \left( {\left| T^*
			\right|}
		\right)^{p\left(1-\alpha\right)}} \right\|^{1/p}\nonumber
	\end{align}
	for all $p\ge2$. In particular, we have
	\begin{align}
	\label{eq4.3} w\left(TS\right)  &\le r\left( S \right) \cdot
	\left[\left\| {T} \right\|^2  - \ell ^2 \left( {\left| {\left[ { \left| T \right|- \left\| { \left| T \right| ^{1/2}} \right\|} \right]} \right|  } \right)\right]^{\frac{1}{4}}\\
	&\qquad \times\left[\left\| { T} \right\|^2  - \ell ^2 \left(
	{\left| {\left[ {  \left| T^* \right| - \left\| {  \left| T^*
					\right| ^{ 1/2}} \right\|} \right]} \right|  }
	\right)\right]^{\frac{1}{4}} \nonumber
	\\
	&\le r\left( S \right) \left\|T\right\|.\nonumber
	\end{align}
\end{corollary}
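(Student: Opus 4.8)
The plan is to obtain \eqref{eq4.2} by running the argument of Theorem \ref{thm2} with the concrete power functions $f(t)=t^{\alpha}$ and $g(t)=t^{1-\alpha}$, where $0\le\alpha\le1$ so that $f,g$ are nonnegative and continuous on $[0,\infty)$ and $f(t)g(t)=t^{\alpha}t^{1-\alpha}=t$; the hypothesis $|T|S=S^{*}|T|$ is exactly the one required there. Concretely, I would start from the refined Kittaneh-type bound \eqref{eq3.7} (which is \eqref{eq3.6} already specialized to these $f,g$), put $y=x$, raise both sides to the power $2p$, and take the supremum over unit vectors $x$. Then I would estimate the two bracketed factors exactly as in the proof of Theorem \ref{thm2}: the supremum of a product is at most the product of suprema, and for each factor the supremum of a difference $\langle Dx,x\rangle-\langle|D-\langle Dx,x\rangle 1_{\mathscr{H}}|^{p}x,x\rangle$ is controlled by $\sup_{\|x\|=1}\langle Dx,x\rangle-\inf_{\|x\|=1}\langle|D-\|D\|1_{\mathscr{H}}|^{p}x,x\rangle=\|D\|-\ell(|D-\|D\|1_{\mathscr{H}}|^{p})$, with $D=|T|^{2\alpha}$ in the first factor and $D=|T^{*}|^{2(1-\alpha)}$ in the second. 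The single point where I would deviate from Theorem \ref{thm2} is to not invoke Lemma \ref{lemma6}, because \eqref{eq4.2} keeps the factor $r(S)$ rather than the coarser $\tfrac12(\|S\|+\|S^{2}\|^{1/2})$.

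Taking $2p$-th roots and substituting $f^{p}(|T|)=|T|^{p\alpha}$, $f^{2}(|T|)=|T|^{2\alpha}$, $g^{p}(|T^{*}|)=|T^{*}|^{p(1-\alpha)}$, $g^{2}(|T^{*}|)=|T^{*}|^{2(1-\alpha)}$ then produces the first inequality of \eqref{eq4.2}; the bracketed quantities are to be read as in \eqref{eq4.1}, noting that for a positive operator $E$ one has $\ell(E^{p})=\ell(E)^{p}=\bigl(\ell(E^{p/2})\bigr)^{2}$, so the $\ell^{2}(|[\,\cdot\,]|^{p/2})$ displayed in \eqref{eq4.2} is the same as the $\ell(|[\,\cdot\,]|^{p})$ thrown up by the argument, and $\sup_{\|x\|=1}\langle|T|^{2\alpha}x,x\rangle=\||T|^{2\alpha}\|=\||T|^{\alpha}\|^{2}$. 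The second, cruder inequality in \eqref{eq4.2} is then immediate, since each $\ell^{2}(\cdot)$ term is nonnegative, so discarding it only enlarges the right-hand side, and $\bigl(\||T|^{p\alpha}\|^{2}\bigr)^{1/(2p)}=\||T|^{p\alpha}\|^{1/p}$, with the analogous simplification for the $T^{*}$-factor.

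Finally, \eqref{eq4.3} is just the case $p=2$, $\alpha=\tfrac12$ of \eqref{eq4.2}: then $p\alpha=p(1-\alpha)=1$, $2\alpha=2(1-\alpha)=1$, $p/2=1$, $1/(2p)=\tfrac14$, and $1/p=\tfrac12$, so the exponents and the powers of $|T|$ and $|T^{*}|$ reduce as displayed. Using that $|T|\ge0$ gives $\||T|^{a}\|=\||T|\|^{a}=\|T\|^{a}$ (because $\||T|\|^{2}=\||T|^{2}\|=\|T^{*}T\|=\|T\|^{2}$), and likewise $\||T^{*}|\|=\|T^{*}\|=\|T\|$; hence $\||T|^{p\alpha}\|^{2}=\|T\|^{2}$, $\||T|^{\alpha}\|=\||T|^{1/2}\|$, and the crude bound collapses to $r(S)\|T\|^{1/2}\|T\|^{1/2}=r(S)\|T\|$. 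I do not anticipate a genuine obstacle here: the whole content is a substitution together with careful bookkeeping of the two nested radicals, and the only place demanding attention is keeping the powers of $2$ consistent inside the $\ell$-terms and the inner norms when passing from the abstract $f,g$ to $t^{\alpha},t^{1-\alpha}$.
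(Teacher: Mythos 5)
Your proposal is correct and takes essentially the same route as the paper: specialize $f(t)=t^{\alpha}$, $g(t)=t^{1-\alpha}$ in the argument of Theorem \ref{thm2}, then set $p=2$, $\alpha=\tfrac12$ and use $\||T|\|=\||T^{*}|\|=\|T\|$ to get \eqref{eq4.3}. Your one deviation --- stopping before Lemma \ref{lemma6} so as to retain the factor $r(S)$ --- is in fact what the corollary as stated requires (the paper's literal instruction ``set $f,g$ in \eqref{eq4.1}'' would only produce the coarser factor $\tfrac12(\|S\|+\|S^{2}\|^{1/2})$), and your reading of the bracketed $\ell$-terms as those of \eqref{eq4.1} correctly resolves the paper's notational slips there.
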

\begin{proof}
	Setting $f\left(t\right)=t^{\alpha}$ and
	$g\left(t\right)=t^{1-\alpha}$ in \eqref{eq4.1},  we get the
	inequality \eqref{eq4.2}. Choosing $p=2$ and $\alpha=\frac{1}{2}$
	in \eqref{eq4.2} and use the fact that $\||T|\|=\||T^*|\|=\|T\|$,
	we get the second inequality \eqref{eq4.3}.
\end{proof}

Another generalization of the above inequalities under Kittaneh's
assumptions is embedded as follows:
\begin{corollary}
	Under the assumptions of Theorem \ref{thm2}, we have
	\begin{align*}
	w\left( {TS}  \right) &\le \frac{1}{4}\left( {\left\| S \right\| +
		\left\| {S^2 } \right\|^{1/2} } \right) \left\|f^{2p} \left(
	{\left| T \right|} \right)+g^{2p} \left( {\left| T^* \right|}
	\right)\right\|
	\\
	&\le \frac{1}{8} \left( {\left\| S \right\| + \left\| {S^2 }
		\right\|^{1/2} } \right) \cdot\left\{\left( {\left\| {f^{p} \left(
			{\left| A \right|} \right)} \right\|^2 + \left\| {g^{p}\left(
			{\left| {A^* } \right|} \right)} \right\|^2} \right) \right.
	\\
	&\qquad\left.+ \sqrt {\left( {\left\| {f^{2p} \left( {\left|A
					\right|} \right)} \right\| - \left\| {g^{2p} \left( {\left| {A^* }
					\right|} \right)} \right\|} \right)^2  + 4\left\|
		{f^{p}\left({\left| A \right|} \right)g^{p}\left( {\left| {A^* }
				\right|} \right)} \right\|}\right\}
	\end{align*}
	for all $p\ge2$.
\end{corollary}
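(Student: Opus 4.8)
The plan is to feed the refined generalized mixed Schwarz inequality \eqref{eq3.6} into the numerical radius by the substitution $y=x$, then to collapse the resulting product of two quadratic forms into the norm of a single positive operator via the scalar arithmetic--geometric mean inequality, and finally to insert the spectral radius estimate $r(S)\le \tfrac12\bigl(\|S\|+\|S^2\|^{1/2}\bigr)$ obtained from Lemma \ref{lemma6} (with $A=S$, $B=1_{\mathscr{H}}$), exactly as in the proof of Theorem \ref{thm2}.

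First I would take the weaker (last) inequality in \eqref{eq3.6}, set $y=x$, and raise to the power $2p$, obtaining for every unit vector $x$
\[
\bigl|\langle TSx,x\rangle\bigr|^{2p}\ \le\ r^{2p}(S)\,\bigl\langle f^{2p}(|T|)x,x\bigr\rangle\,\bigl\langle g^{2p}(|T^*|)x,x\bigr\rangle .
\]
Since $f^{2p}(|T|)$ and $g^{2p}(|T^*|)$ are positive operators, both quadratic forms are nonnegative, so $ab\le\tfrac14(a+b)^2$ gives $\bigl\langle f^{2p}(|T|)x,x\bigr\rangle\bigl\langle g^{2p}(|T^*|)x,x\bigr\rangle\le \tfrac14\bigl\langle\bigl(f^{2p}(|T|)+g^{2p}(|T^*|)\bigr)x,x\bigr\rangle^2\le \tfrac14\bigl\|f^{2p}(|T|)+g^{2p}(|T^*|)\bigr\|^2$, the last inequality holding because $f^{2p}(|T|)+g^{2p}(|T^*|)$ is positive and $\|x\|=1$. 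Taking the supremum over $\|x\|=1$, extracting the $2p$-th root, and substituting the above bound for $r(S)$ then yields the first asserted inequality.

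For the second inequality I would apply the norm estimate \eqref{fact1} to the positive operators $A':=f^{2p}(|T|)$ and $B':=g^{2p}(|T^*|)$. Using the functional-calculus identities $\|A'\|=\|f^{p}(|T|)\|^2$, $\|B'\|=\|g^{p}(|T^*|)\|^2$ and $(A')^{1/2}=f^{p}(|T|)$, $(B')^{1/2}=g^{p}(|T^*|)$ (valid because $f,g\ge 0$ and $|T|,|T^*|\ge 0$), \eqref{fact1} bounds $\|f^{2p}(|T|)+g^{2p}(|T^*|)\|$ by the bracketed quantity in the statement; one may further invoke \eqref{fact2} if one prefers the mixed term rewritten via $\|f^{p}(|T|)g^{p}(|T^*|)\|^2\le\|f^{2p}(|T|)g^{2p}(|T^*|)\|$. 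Multiplying through by the coefficient already obtained in the first step completes the proof.

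The only genuine work is the bookkeeping of exponents and constants: one must track how the $2p$-th root interacts with the factor $\tfrac14$ coming from the arithmetic--geometric mean step and with the spectral radius estimate, and make sure the identities $\|f^{2p}(|T|)\|=\|f^{p}(|T|)\|^{2}$ and $(f^{2p}(|T|))^{1/2}=f^{p}(|T|)$ are used consistently throughout. Conceptually nothing is hard once \eqref{eq3.6}, Lemma \ref{lemma6}, and \eqref{fact1} are available; the delicate point is simply lining the constants and the power of the norm up with the claimed form of the inequality.
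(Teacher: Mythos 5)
Your first two steps are fine, but the final step --- ``taking the supremum over $\|x\|=1$, extracting the $2p$-th root, and substituting the above bound for $r(S)$ then yields the first asserted inequality'' --- is precisely where the argument breaks, and it is the one piece of bookkeeping you deferred. From
\begin{align*}
\bigl|\langle TSx,x\rangle\bigr|^{2p}\ \le\ r^{2p}(S)\cdot\tfrac14\,\bigl\|f^{2p}(|T|)+g^{2p}(|T^*|)\bigr\|^{2},
\end{align*}
extracting the $2p$-th root gives
\begin{align*}
w(TS)\ \le\ r(S)\,2^{-1/p}\,\bigl\|f^{2p}(|T|)+g^{2p}(|T^*|)\bigr\|^{1/p},
\end{align*}
that is, the norm appears to the power $1/p$ with constant $2^{-1/p}$, not to the first power with constant $\tfrac12$. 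For $p>1$ these two bounds are genuinely different and neither dominates the other, so your derivation does not produce the stated inequality. (The paper's own proof conceals the same difficulty: it quotes the last inequality of \eqref{eq3.6} with square roots in place of the $2p$-th roots that \eqref{eq3.6} actually provides.)

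Moreover, no amount of careful bookkeeping can close this gap, because the first displayed inequality of the corollary is false as stated: take $T=\tfrac12\,1_{\mathscr{H}}$, $S=1_{\mathscr{H}}$, $f(t)=g(t)=t^{1/2}$ and $p=2$. Then $w(TS)=\tfrac12$, while $\tfrac14\bigl(\|S\|+\|S^2\|^{1/2}\bigr)\bigl\|f^{4}(|T|)+g^{4}(|T^*|)\bigr\|=\tfrac12\cdot\tfrac12=\tfrac14$. What your (correct) first two steps actually establish is
\begin{align*}
w(TS)\ \le\ \tfrac12\bigl(\|S\|+\|S^2\|^{1/2}\bigr)\Bigl(\tfrac12\bigl\|f^{2p}(|T|)+g^{2p}(|T^*|)\bigr\|\Bigr)^{1/p},
\end{align*}
and the second stage of your plan --- applying \eqref{fact1} and \eqref{fact2} to the positive operators $f^{2p}(|T|)$ and $g^{2p}(|T^*|)$, which is sound and identical to the paper's --- would then have to be carried out inside the exponent $1/p$. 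You should either prove the corrected statement above or flag that the corollary as printed cannot be correct.
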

\begin{proof}
	In the second inequality in \eqref{eq3.6}, let $x=y$ then we have
	\begin{align*}
	\left| {\left\langle {TSx,x} \right\rangle } \right|  &\le r\left(
	S \right) \sqrt{\left\langle { {f^{2p} \left( {\left| T \right|}
				\right)} x,x} \right\rangle } \sqrt{\left\langle { {g^{2p} \left(
				{\left| T^* \right|} \right)} x,x} \right\rangle }
	\\
	&\le \frac{1}{2} r\left( S \right) \left( {\left\langle { {f^{2p}
				\left( {\left| T \right|} \right)} x,x} \right\rangle
		+\left\langle { {g^{2p} \left( {\left| T^* \right|} \right)} x,x}
		\right\rangle }\right) \,\,\,\,{\rm{(by\, AM\text{-}GM \,
			inequality)}}
	\\
	&\le \frac{1}{2} r\left( S \right) \left\|f^{2p} \left( {\left| T
		\right|} \right)+g^{2p} \left( {\left| T^* \right|}
	\right)\right\|
	\\
	&\le \frac{1}{4}r\left( S \right)\left( {\left\| {f^{2p} \left(
			{\left| A \right|} \right)} \right\| + \left\| {g^{2p}\left(
			{\left| {A^* } \right|} \right)} \right\|} \right)
	\\
	&\qquad+ \frac{1}{4}\sqrt {\left( {\left\| {f^{2p} \left( {\left|
					A \right|} \right)} \right\| - \left\| {g^{2p} \left( {\left| {A^*
					} \right|} \right)} \right\|} \right)^2  + 4\left\|
		{f^{\frac{p}{2}}\left( {\left| A \right|}
			\right)g^{\frac{p}{2}}\left( {\left| {A^* } \right|} \right)}
		\right\|^2}
	\end{align*}
	Now, using  \eqref{fact1} and    \eqref{fact2} in the last
	inequality  and use the inequality
	\begin{align*}
	r\left( S \right) \le   \frac{1}{2}\left( {\left\| S \right\| +
		\left\| {S^2 } \right\|^{1/2} } \right),
	\end{align*}
	Substituting all together   in the last  inequality and taking the
	supremum for all $x\in \mathscr{H}$, we get the desired result.
\end{proof}

\subsection{ Numerical radius inequalities   for $n \times n$ matrix Operators}
On the other hand, several refinements inequalities for  numerical
radius of $n\times n$ operator matrices have been recently
obtained by many other authors see for example \cite{AF2},
\cite{D1}--\cite{D3}, \cite{FK1}--\cite{FK3}, \cite{OMN}. Among
others, three important facts are obtained by different authors
are summarized together in the following result.

Let  $A=\left[A_{ij}\right]\in \mathscr{B}\left(\bigoplus _{i =
	1}^n \mathscr{H}_i\right)$. Then
\begin{align}
\label{eq1.6} w\left(A\right)\le \left\{ \begin{array}{l}
\omega \left( {\left[ {t_{ij}^{\left( 1 \right)} } \right]} \right),\qquad {\rm{Hou \,\&\, Du \,\,in}\,\,}\text{\cite{HD}} \\
\\
\omega \left( {\left[ {t_{ij}^{\left( 2 \right)} } \right]} \right) ,\qquad {\rm{BaniDomi \,\&\, Kittaneh \,\,in}\,\,} \text{\cite{BF}}\\
\\
\omega \left( {\left[ {t_{ij}^{\left( 3 \right)} } \right]} \right),\qquad  {\rm{AbuOmar \,\&\, Kittaneh \,\,in}\,\,} \text{\cite{AF1}}\\
\end{array} \right.;
\end{align}
where
\begin{align*}
t_{ij}^{\left( 1 \right)}  &= \omega \left( {\left[ {\left\|
		{T_{ij} } \right\|} \right]} \right) ,
\qquad
t_{ij}^{\left( 2 \right)}  = \left\{ \begin{array}{l}
\frac{1}{2}\left( {\left\| {T_{ii} } \right\| + \left\| {T_{ii}^2 } \right\|^{1/2} } \right),\,\,\,\,\,\,\,i = j \\
\\
\left\| {T_{ij} } \right\|,\qquad\qquad\qquad\,\,\,\,\,\,\,\,\,\,i \ne j
\end{array} \right. ,
\end{align*}
and
\begin{align*}
t_{ij}^{\left( 3 \right)}  = \left\{ \begin{array}{l}
\omega \left( {T_{ii} } \right),\,\,\,\,\,\,\,i = j \\
\\
\left\| {T_{ij} } \right\|,\,\,\,\,\,\,\,\,\,\, i \ne j \\
\end{array} \right.
\end{align*}

Our next result gives a new bound for Numerical radius of $n
\times n$ matrix Hilbert Operators.
\begin{theorem}
	Let  $A=\left[A_{ij}\right]\in \mathscr{B}\left(\bigoplus _{i =
		1}^n \mathscr{H}_i\right)$ and $f,g$ be as in Lemma \ref{lemma5}.
	Then
	\begin{align}
	\label{eq4.4}w\left(A\right)\le w\left(\left[a_{ij}\right]\right)
	\end{align}
	where
	\begin{align*}
	a_{ij}  = \left\{ \begin{array}{l}
	\frac{1}{4}B_{ii},\qquad i = j \\
	\\
	\left\| {A_{ij} } \right\|,\,\,\,\,\,\,\,\, i \ne j \\
	\end{array} \right.
	\end{align*}
	such that
	\begin{align*}
	B_{ii}&=\left\| {f^2 \left( {\left| {A_{ii} } \right|} \right)}
	\right\| + \left\| {g^2 \left( {\left| {A_{ii}^* } \right|}
		\right)} \right\|
	\\
	&\qquad+ \sqrt {\left( {\left\| {f^2 \left( {\left| {A_{ii} }
					\right|} \right)} \right\| - \left\| {g^2 \left( {\left| {A_{ii}^*
					} \right|} \right)} \right\|} \right)^2  +  4\left\| {f \left(
			{\left| {A_{ii} } \right|} \right)g \left( {\left| {A_{ii}^* }
				\right|} \right)} \right\|^{2} }
	\end{align*}
	
\end{theorem}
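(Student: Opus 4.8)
The plan is to reduce the operator matrix $A=[A_{ij}]$ to the scalar matrix $[a_{ij}]$ by the familiar vectorization device: to a vector $x=(x_1,\dots,x_n)\in\bigoplus_{i=1}^{n}\mathscr{H}_i$ we associate the nonnegative vector $\widetilde{x}=(\|x_1\|,\dots,\|x_n\|)\in\mathbb{C}^n$, which satisfies $\|\widetilde{x}\|=\|x\|$. The off-diagonal blocks will be handled by the plain estimate $|\langle A_{ij}x_j,x_i\rangle|\le\|A_{ij}\|\,\|x_j\|\,\|x_i\|$, exactly as in the arguments behind \eqref{eq1.6}; the only new ingredient is a bound on each \emph{diagonal} block, namely $w(A_{ii})\le\tfrac14 B_{ii}$, and the proof will in fact produce this bound as an intermediate step.

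First I would establish the diagonal estimate. Fixing $i$ and applying Lemma~\ref{lemma5} to $A_{ii}$ with the trivial choice $S=1_{\mathscr{H}_i}$ (so that $|A_{ii}|S=S^{*}|A_{ii}|$ holds and $r(S)=1$) gives
\begin{align*}
|\langle A_{ii}x,y\rangle|\le\|f(|A_{ii}|)x\|\,\|g(|A_{ii}^{*}|)y\|
\end{align*}
for all $x,y\in\mathscr{H}_i$. Putting $y=x$, writing the right-hand side as $\langle f^{2}(|A_{ii}|)x,x\rangle^{1/2}\langle g^{2}(|A_{ii}^{*}|)x,x\rangle^{1/2}$, applying the arithmetic--geometric mean inequality, and then taking the supremum over unit vectors, one obtains $w(A_{ii})\le\tfrac12\|f^{2}(|A_{ii}|)+g^{2}(|A_{ii}^{*}|)\|$. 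Now apply \eqref{fact1} to the two positive operators $f^{2}(|A_{ii}|)$ and $g^{2}(|A_{ii}^{*}|)$; since $f,g\ge0$ their positive square roots are precisely $f(|A_{ii}|)$ and $g(|A_{ii}^{*}|)$, so the radical in \eqref{fact1} reproduces exactly the term $4\|f(|A_{ii}|)g(|A_{ii}^{*}|)\|^{2}$ appearing in $B_{ii}$. This yields $\|f^{2}(|A_{ii}|)+g^{2}(|A_{ii}^{*}|)\|\le\tfrac12 B_{ii}$, hence $w(A_{ii})\le\tfrac14 B_{ii}$.

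Next I would assemble the matrix estimate. For a unit vector $x=(x_1,\dots,x_n)$ split
\begin{align*}
\langle Ax,x\rangle=\sum_{i=1}^{n}\langle A_{ii}x_i,x_i\rangle+\sum_{i\ne j}\langle A_{ij}x_j,x_i\rangle,
\end{align*}
and estimate by the triangle inequality, using $|\langle A_{ii}x_i,x_i\rangle|\le w(A_{ii})\|x_i\|^{2}\le\tfrac14 B_{ii}\|x_i\|^{2}$ on the diagonal and Cauchy--Schwarz on the off-diagonal blocks. The resulting bound is precisely $\langle[a_{ij}]\widetilde{x},\widetilde{x}\rangle$, a nonnegative quantity, so $|\langle Ax,x\rangle|\le\langle[a_{ij}]\widetilde{x},\widetilde{x}\rangle\le w([a_{ij}])\,\|\widetilde{x}\|^{2}=w([a_{ij}])$. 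Taking the supremum over all unit $x$ gives $w(A)\le w([a_{ij}])$, which is \eqref{eq4.4}.

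I do not anticipate a serious obstacle: the proof is a routine combination of Lemma~\ref{lemma5}, the arithmetic--geometric mean inequality, \eqref{fact1}, and the vectorization trick. The one point requiring a little care is the identification of the positive square roots of $f^{2}(|A_{ii}|)$ and $g^{2}(|A_{ii}^{*}|)$ with $f(|A_{ii}|)$ and $g(|A_{ii}^{*}|)$ respectively, which is where the nonnegativity of $f$ and $g$ enters and which is needed for \eqref{fact1} to yield the constant $\tfrac14 B_{ii}$ rather than a weaker one; everything else is bookkeeping.
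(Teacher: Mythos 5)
Your proposal is correct and follows essentially the same route as the paper: Lemma \ref{lemma5} with $S=1_{\mathscr{H}_i}$ on the diagonal blocks, the AM--GM inequality, the norm estimate \eqref{fact1} applied to $f^{2}(|A_{ii}|)$ and $g^{2}(|A_{ii}^{*}|)$, the trivial bound on the off-diagonal blocks, and the passage to the nonnegative vector $(\|x_1\|,\dots,\|x_n\|)$. The only cosmetic difference is that you isolate $w(A_{ii})\le\tfrac14 B_{ii}$ as an intermediate statement, whereas the paper carries the estimate inline through the sum.
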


\begin{proof}
	Let $x=\left( {\begin{array}{*{20}c}{ x_1  } & { x_2 } &  \cdots &
		{  x_n }  \\    \end{array}} \right)^T\in \bigoplus _{i = 1}^n
	\mathscr{H}_i$, with $\|x\|=1$. Then we have
	\begin{align}
	& \left| {\left\langle {Ax,x} \right\rangle } \right|\nonumber \\&= \left| {\sum\limits_{i,j = 1}^n {\left\langle {A_{ij} x_j ,x_i } \right\rangle } } \right|\nonumber \\
	&\le \sum\limits_{i,j = 1}^n {\left| {\left\langle {A_{ij} x_j ,x_i } \right\rangle } \right|}  \nonumber\\
	&= \sum\limits_{i = 1}^n {\left| {\left\langle {A_{ii} x_i ,x_i } \right\rangle } \right|}  + \sum\limits_{j \ne i}^n {\left| {\left\langle {A_{ij} x_j ,x_i } \right\rangle } \right|}  \nonumber
	\\
	&\le \sum\limits_{i = 1}^n {\left\langle {f^2 \left( {\left| {A_{ii} } \right|} \right)x_i ,x_i } \right\rangle ^{1/2} \left\langle {g^2 \left( {\left| {A_{ii}^* } \right|} \right)x_i ,x_i } \right\rangle ^{1/2} } +  \sum\limits_{j \ne i}^n {\left| {\left\langle {A_{ij} x_j ,x_i } \right\rangle } \right|}\qquad ({\rm{by}\,\,\eqref{kittaneh.ineq}})  \label{eq4.5}
	\\
	&\le \frac{1}{2}\sum\limits_{i = 1}^n {\left\| {f^2 \left( {\left| {A_{ii} } \right|} \right) + g^2 \left( {\left| {A_{ii}^* } \right|} \right)} \right\|\left\| {x_i } \right\|^2 }  + \sum\limits_{j \ne i}^n {\left\| {A_{ij} } \right\|\left\| {x_i } \right\|\left\| {x_j } \right\|} \nonumber \,\,\,\,{\rm{(by\, AM\text{-}GM \, inequality)}}
	\end{align}
		\begin{align*}
	&\le \frac{1}{4}  \sum\limits_{i = 1}^n {\left( {\left\| {f^2 \left( {\left| {A_{ii} } \right|} \right)} \right\| + \left\| {g^2 \left( {\left| {A_{ii}^* } \right|} \right)} \right\|} \right.} \,\,\,\,\,\,\qquad\qquad({\rm{by}}\,\,\eqref{fact1}) \nonumber
	\\
	&\qquad\left. { + \sqrt {\left( {\left\| {f^2 \left( {\left| {A_{ii} } \right|} \right)} \right\| - \left\| {g^2 \left( {\left| {A_{ii}^* } \right|} \right)} \right\|} \right)^2  + 4\left\| {f \left( {\left| {A_{ii} } \right|} \right)g \left( {\left| {A_{ii}^* } \right|} \right)} \right\|^{2} } } \right)\left\| {x_i } \right\|^2  \nonumber
	\\
	&\qquad+ \sum\limits_{j \ne i}^n {\left\| {A_{ij} } \right\|\left\| {x_i } \right\|\left\| {x_j } \right\|} \nonumber
	\\
	& = \left\langle {\left[ {a_{ij} } \right]y,y} \right\rangle\nonumber
	\end{align*}
	where $y=\left( {\begin{array}{*{20}c}{\left\| {x_1 } \right\|} &
		{\left\| {x_2 } \right\|} &  \cdots  & {\left\| {x_n } \right\|}
		\\  \end{array}} \right)^T$. Taking the supremum for all $x\in \mathscr{H}$, we get the desired result.
\end{proof}

\begin{corollary}
	If $\bf{A}=\left[ {\begin{array}{*{20}c}
		{A_{11} } & {A_{12} }  \\
		{A_{21} } & {A_{22} }  \\
		\end{array}} \right]$ in $ \mathscr{B}\left(\mathscr{H}_1\oplus\mathscr{H}_2\right)$ , then
	\begin{align}
	\label{eq4.6}w\left(A\right)\le
	w\left(\left[\widehat{a}_{ij}\right]\right)
	\end{align}
	where
	\begin{align*}
	\widehat{a}_{ij}  = \left\{ \begin{array}{l}
	\frac{1}{4}\widehat{B}_{ii},\qquad i = j \\
	\\
	\left\| {A_{ij} } \right\|,\,\,\,\,\,\,\,\,\, i \ne j \\
	\end{array} \right.
	\end{align*}
	such that
	\begin{align*}
	\widehat{B}_{ii}&=\left\| {\left| {A_{ii} } \right|^{2\alpha } } \right\| + \left\| {\left| {A_{ii}^* } \right|^{2\left( {1 - \alpha } \right)} } \right\| \\&\qquad+ \sqrt {\left( {\left\| {\left| {A_{ii} } \right|^{2\alpha } } \right\| - \left\| {\left| {A_{ii}^* } \right|^{2\left( {1 - \alpha } \right)} } \right\|} \right)^2  + 4\left\| {\left| {A_{ii} } \right|^\alpha  \left| {A_{ii}^* } \right|^{\left( {1 - \alpha } \right)} } \right\|^2 }\\
	&\left(:=\widehat{B}_{ii}\left(\alpha\right) \right)
	\end{align*}
\end{corollary}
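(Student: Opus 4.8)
The plan is to derive this statement directly from the preceding theorem (inequality \eqref{eq4.4}) by specializing the block size to $n=2$ and by choosing a one-parameter family of admissible functions. First I would observe that, for each fixed $\alpha\in[0,1]$, the functions $f(t)=t^{\alpha}$ and $g(t)=t^{1-\alpha}$ are nonnegative and continuous on $[0,\infty)$ and satisfy $f(t)g(t)=t^{\alpha}t^{1-\alpha}=t$ for every $t\ge0$; hence they are exactly of the type required in Lemma \ref{lemma5}, so the theorem applies to $\mathbf{A}=\left[A_{ij}\right]\in\mathscr{B}\left(\mathscr{H}_1\oplus\mathscr{H}_2\right)$ with this choice of $f$ and $g$.

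Next I would carry out the substitution in the quantity $B_{ii}$ that appears in \eqref{eq4.4}. By the continuous functional calculus one has $f^{2}\!\left(\left|A_{ii}\right|\right)=\left|A_{ii}\right|^{2\alpha}$, $g^{2}\!\left(\left|A_{ii}^{*}\right|\right)=\left|A_{ii}^{*}\right|^{2(1-\alpha)}$ and $f\!\left(\left|A_{ii}\right|\right)g\!\left(\left|A_{ii}^{*}\right|\right)=\left|A_{ii}\right|^{\alpha}\left|A_{ii}^{*}\right|^{1-\alpha}$. Inserting these three identities into the definition of $B_{ii}$ turns it verbatim into $\widehat{B}_{ii}(\alpha)$, while the off-diagonal entries $\left\|A_{ij}\right\|$ are left untouched, so the matrix $\left[a_{ij}\right]$ becomes $\left[\widehat{a}_{ij}\right]$. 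The asserted bound \eqref{eq4.6} is then precisely \eqref{eq4.4} read off in the case $n=2$ for this particular $f$ and $g$.

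Since every step is a straightforward specialization, there is no genuine obstacle; the only point that warrants an explicit line is verifying the admissibility of $f$ and $g$ (nonnegativity, continuity including at $0$, and the product condition $f(t)g(t)=t$), which is immediate. One may optionally add the remark that taking $\alpha=\tfrac12$ and using $\big\|\,|A_{ii}|\,\big\|=\big\|\,|A_{ii}^{*}|\,\big\|=\left\|A_{ii}\right\|$ yields the symmetric bound in terms of $\left\|A_{ii}\right\|$ and $\big\|\,|A_{ii}|^{1/2}|A_{ii}^{*}|^{1/2}\big\|$, in parallel with the passage from \eqref{eq4.1} to \eqref{eq4.3}.
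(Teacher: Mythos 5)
Your proposal is correct and follows essentially the same route as the paper: specialize the preceding theorem \eqref{eq4.4} to $f(t)=t^{\alpha}$, $g(t)=t^{1-\alpha}$ and read off $B_{ii}=\widehat{B}_{ii}(\alpha)$ via the functional calculus. The paper's proof additionally evaluates $w\bigl(\bigl[\widehat{a}_{ij}\bigr]\bigr)$ explicitly for the $2\times2$ nonnegative matrix, but that computation is not needed for the inequality \eqref{eq4.6} as stated.
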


\begin{proof}
	Setting $f\left(t\right)=t^\alpha$ and $g\left(t\right)=t^{1-\alpha}$ in \eqref{eq4.4}, then we get
	\begin{align*}
	w\left( \left[ {\begin{array}{*{20}c}
		{A_{11} } & {A_{12} }  \\
		{A_{21} } & {A_{22} }  \\
		\end{array}} \right]\right) &\le w
	\left( {\begin{array}{*{20}c}
		\begin{array}{l}
		\frac{1}{4}\widehat{B}_{11}  \\
		\\
		\end{array} & \begin{array}{l}
		\left\| {A_{12} } \right\|\\
		\\
		\end{array}  \\
		\left\| {A_{21} } \right\| & \frac{1}{4}\widehat{B}_{22} \\
		\end{array}} \right)
	\\
	&= \frac{1}{2}r         \left( {\begin{array}{*{20}c}
		\begin{array}{l}
		\frac{1}{2}\widehat{B}_{11}  \\
		\\
		\end{array} & \begin{array}{l}
		\left\| {A_{12} } \right\|+\left\| {A_{21} } \right\|\\
		\\
		\end{array}  \\
		\left\| {A_{21} } \right\|+\left\| {A_{12} } \right\| & \frac{1}{2}\widehat{B}_{22} \\
		\end{array}} \right)
	\\
	&=  \frac{1}{4}\left(\widehat{B}_{11}+ \widehat{B}_{22}+ \sqrt {\left( { \widehat{B}_{11}-\widehat{B}_{22}} \right)^2  +  \left(    \left\| {A_{12} } \right\|+\left\| {A_{21} } \right\| \right)^2 }  \right)
	\end{align*}
	which gives the required result.
\end{proof}

\begin{remark}
	Setting $\alpha=\frac{1}{2}$ in \eqref{eq4.6}  and employing the
	facts \eqref{fact1} and \eqref{fact2}, so that we get
	\eqref{eq1.2}.
\end{remark}

\begin{theorem}
	\label{thm4}    Let  $A=\left[A_{ij}\right]\in
	\mathscr{B}\left(\bigoplus _{i = 1}^n \mathscr{H}_i\right)$ and
	$f,g$ be as in Lemma \ref{lemma3}. Then
	\begin{align}
	\label{eq4.7}w\left(A\right)\le w\left(\left[h_{ij}\right]\right)
	\end{align}
	where
	\begin{align*}
	h_{ij}  = \left\{ \begin{array}{l}
	\frac{1}{4}\left( {D_{ii}  - d_{ii} } \right),\,\,\,\,\,i = j \\
	\\
	\left\| {A_{ij} } \right\|,\qquad\,\,\,\,\,\,\,\,\,\,i \ne j \\
	\end{array} \right.
	\end{align*}
	such that
	\begin{align*}
	D_{ii} & = \frac{1}{2}\left( {\left\| {f^4 \left( {\left| {A_{ii}
				} \right|} \right)} \right\| + \left\| {g^4 \left( {\left|
				{A_{ii}^* } \right|} \right)} \right\| }\right.\\&\qquad\left.{+
		\sqrt {\left( {\left\| {f^4 \left( {\left| {A_{ii} } \right|}
					\right)} \right\| - \left\| {g^4 \left( {\left| {A_{ii}^* }
						\right|} \right)} \right\|} \right)^2  + 4\left\| {f^2 \left(
				{\left| {A_{ii} } \right|} \right)g^2 \left( {\left| {A_{ii}^* }
					\right|} \right)} \right\|^{1/2} } } \right)
	\end{align*}
	and
	\begin{align*}
	d_{ii}  &= \left\| {\left| {f^2\left( {\left| {A_{ii} } \right|}
			\right) - \left\| {f^2\left( {\left| {A_{ii} } \right|} \right)}
			\right\|} \right|^2 + \left| {g^2\left( {\left| {A_{ii}^* }
				\right|} \right) - \left\| {g^2\left( {\left| {A_{ii}^* } \right|}
				\right)} \right\|} \right|^2} \right\|
	\end{align*}
\end{theorem}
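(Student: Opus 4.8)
The strategy is to run the argument of the preceding theorem, but to feed the \emph{refined} mixed Schwarz inequality \eqref{eq3.6} (with the exponent $p=2$) into the diagonal blocks in place of the plain mixed Schwarz inequality. Fix a unit vector $x=\left(x_1,\dots,x_n\right)^T\in\bigoplus_{i=1}^n\mathscr{H}_i$, so that $\sum_{i=1}^n\|x_i\|^2=1$. Expanding $\left\langle Ax,x\right\rangle=\sum_{i,j=1}^n\left\langle A_{ij}x_j,x_i\right\rangle$ and using the triangle inequality,
\[
\left|\left\langle Ax,x\right\rangle\right|\le\sum_{i=1}^n\left|\left\langle A_{ii}x_i,x_i\right\rangle\right|+\sum_{i\ne j}\left|\left\langle A_{ij}x_j,x_i\right\rangle\right|.
\]
The off-diagonal terms are disposed of exactly as before, by the Cauchy--Schwarz inequality, $\left|\left\langle A_{ij}x_j,x_i\right\rangle\right|\le\|A_{ij}\|\,\|x_i\|\,\|x_j\|$, which produces the off-diagonal entries of $[h_{ij}]$.

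For a diagonal block I would apply \eqref{eq3.6} with $T=A_{ii}$, $S=1_{\mathscr{H}_i}$ (so that $r(S)=1$ and the compatibility hypothesis $|A_{ii}|S=S^*|A_{ii}|$ holds automatically), with $p=2$, and with $y=x=x_i$. Abbreviating $P_{ii}=f^2(|A_{ii}|)$ and $Q_{ii}=g^2(|A_{ii}^*|)$, so that $P_{ii}^2=f^4(|A_{ii}|)$ and $Q_{ii}^2=g^4(|A_{ii}^*|)$, this yields
\begin{align*}
\left|\left\langle A_{ii}x_i,x_i\right\rangle\right|\le{}&\left[\left\langle P_{ii}^2x_i,x_i\right\rangle-\left\langle\left|P_{ii}-\left\langle P_{ii}x_i,x_i\right\rangle1\right|^2x_i,x_i\right\rangle\right]^{1/4}\\
&\times\left[\left\langle Q_{ii}^2x_i,x_i\right\rangle-\left\langle\left|Q_{ii}-\left\langle Q_{ii}x_i,x_i\right\rangle1\right|^2x_i,x_i\right\rangle\right]^{1/4}.
\end{align*}
Pairing the two fourth-root factors via $\sqrt[4]{ab}\le\sqrt{(a+b)/2}$ (the arithmetic--geometric mean inequality) and expanding the inner product collapses the right-hand side to
\[
\frac{1}{\sqrt2}\left[\left\langle\bigl(f^4(|A_{ii}|)+g^4(|A_{ii}^*|)\bigr)x_i,x_i\right\rangle-\left\langle R_{ii}x_i,x_i\right\rangle\right]^{1/2},
\]
where $R_{ii}=\bigl|P_{ii}-\left\langle P_{ii}x_i,x_i\right\rangle1\bigr|^2+\bigl|Q_{ii}-\left\langle Q_{ii}x_i,x_i\right\rangle1\bigr|^2\ge0$ is the ($x_i$-dependent) ``variance'' correction.

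Now I would estimate the two pieces. Since $\bigl(f^4(|A_{ii}|)\bigr)^{1/2}=P_{ii}$ and $\bigl(g^4(|A_{ii}^*|)\bigr)^{1/2}=Q_{ii}$, the norm estimate \eqref{fact1} applied to the positive operators $f^4(|A_{ii}|)$ and $g^4(|A_{ii}^*|)$ (and, where needed, \eqref{fact2}) bounds the leading term by $D_{ii}\,\|x_i\|^2$. For the correction term one needs a lower bound that is uniform in $x_i$: after taking the supremum over $x_i$ one replaces the state-dependent means $\left\langle P_{ii}x_i,x_i\right\rangle$ and $\left\langle Q_{ii}x_i,x_i\right\rangle$ by the operator norms $\|P_{ii}\|=\|f^2(|A_{ii}|)\|$ and $\|Q_{ii}\|=\|g^2(|A_{ii}^*|)\|$, and bounds the resulting infimum from below by $d_{ii}\,\|x_i\|^2$. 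Collecting powers gives $\left|\left\langle A_{ii}x_i,x_i\right\rangle\right|\le h_{ii}\,\|x_i\|^2$ with $h_{ii}=\tfrac14(D_{ii}-d_{ii})$. Combining with the off-diagonal bound,
\[
\left|\left\langle Ax,x\right\rangle\right|\le\sum_{i=1}^n h_{ii}\|x_i\|^2+\sum_{i\ne j}\|A_{ij}\|\,\|x_i\|\,\|x_j\|=\left\langle[h_{ij}]\,y,y\right\rangle,
\]
with $y=\left(\|x_1\|,\dots,\|x_n\|\right)^T$ a unit vector of $\mathbb{C}^n$; taking the supremum over all unit $x$ yields $w(A)\le w([h_{ij}])$, i.e.\ \eqref{eq4.7}.

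The step I expect to be the genuine obstacle is the passage from $\left\langle R_{ii}x_i,x_i\right\rangle$ to the $x_i$-free quantity $d_{ii}$: one must verify that, after taking the supremum over $x_i$, replacing the state-dependent means $\left\langle P_{ii}x_i,x_i\right\rangle,\left\langle Q_{ii}x_i,x_i\right\rangle$ by the norms $\|P_{ii}\|,\|Q_{ii}\|$ leaves the inequality correctly oriented, and that the accumulated exponents — the two fourth roots, the single square root, and the factors $\tfrac12$ coming from \eqref{fact1} and from the arithmetic--geometric mean step — combine into the constant $\tfrac14$ in $h_{ii}$. By contrast the off-diagonal estimate and the final reduction to the numerical radius of an $n\times n$ scalar matrix are routine, being essentially those of the preceding theorem.
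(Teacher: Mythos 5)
Your outline reproduces the paper's own proof almost step for step: the same expansion of $\left\langle Ax,x\right\rangle$, the same Cauchy--Schwarz treatment of the off-diagonal blocks, the refined inequality \eqref{eq3.8} (i.e.\ \eqref{eq3.6} with $p=2$ and $S=1_{\mathscr{H}_i}$) on the diagonal blocks, AM--GM to merge the two fourth roots, and \eqref{fact1}--\eqref{fact2} to produce $D_{ii}$. But the step you explicitly defer --- passing from the $x_i$-dependent correction $\left\langle R_{ii}x_i,x_i\right\rangle$ to the constant $d_{ii}\left\| x_i\right\|^2$ --- is not a formality; it is a genuine gap, and as oriented it fails. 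Since $R_{ii}$ is \emph{subtracted} inside an upper bound, you need a \emph{lower} bound $\left\langle R_{ii}x_i,x_i\right\rangle\ge d_{ii}\left\| x_i\right\|^2$ valid for every $x_i$. Yet $d_{ii}$ is defined as an operator norm, i.e.\ a supremum, so (writing $P_{ii}=f^2(|A_{ii}|)$, $Q_{ii}=g^2(|A_{ii}^*|)$) the quantity $d_{ii}\left\| x_i\right\|^2$ is an \emph{upper} bound for $\left\langle\bigl(|P_{ii}-\|P_{ii}\|1|^2+|Q_{ii}-\|Q_{ii}\|1|^2\bigr)x_i,x_i\right\rangle$; moreover the term actually present is centred at the state-dependent means $\left\langle P_{ii}x_i,x_i\right\rangle$, $\left\langle Q_{ii}x_i,x_i\right\rangle$, not at the norms. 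Concretely, if $x_i$ is a unit common eigenvector of $P_{ii}$ and $Q_{ii}$ (e.g.\ for diagonal matrices), then $\left\langle R_{ii}x_i,x_i\right\rangle=0$ while $d_{ii}>0$ whenever $P_{ii}$ or $Q_{ii}$ is non-scalar, so the required lower bound is false. The paper's own proof performs exactly this replacement without justification, so you have located, rather than overlooked, the weak point of the argument.

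There is also an exponent-bookkeeping problem you should not wave away: after AM--GM you hold $\tfrac{1}{\sqrt2}\left[\left\langle\bigl(f^4(|A_{ii}|)+g^4(|A_{ii}^*|)\bigr)x_i,x_i\right\rangle-\left\langle R_{ii}x_i,x_i\right\rangle\right]^{1/2}$, which scales like $\left\| x_i\right\|$ (the bracket is $O(\left\| x_i\right\|^2)$ and you take its square root), whereas the target $h_{ii}\left\| x_i\right\|^2$ scales like $\left\| x_i\right\|^2$; ``collecting powers'' cannot by itself produce $\tfrac14(D_{ii}-d_{ii})\left\| x_i\right\|^2$, and the paper makes the same silent leap. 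To repair the diagonal estimate one would have to either keep the AM--GM at the level $\left\langle f^2 x_i,x_i\right\rangle^{1/2}\left\langle g^2x_i,x_i\right\rangle^{1/2}\le\tfrac12\left\langle(f^2+g^2)x_i,x_i\right\rangle$ before invoking the refinement, or reformulate the correction term as an infimum-type quantity (an $\ell(\cdot)$ rather than a $\left\|\cdot\right\|$) so that the subtraction is legitimately a lower bound.
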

\begin{proof}
	From \eqref{eq4.5} we have
	\begin{align*}
	&\left| {\left\langle {Ax,x} \right\rangle } \right| \\
	&\le \sum\limits_{i = 1}^n {\left\langle {f^2 \left( {\left| {A_{ii} } \right|} \right)x_i ,x_i } \right\rangle ^{1/2} \left\langle {g^2 \left( {\left| {A_{ii}^* } \right|} \right)x_i ,x_i } \right\rangle ^{1/2} }  + \sum\limits_{j \ne i}^n {\left| {\left\langle {A_{ij} x_j ,x_i } \right\rangle } \right|}
	\\
	&\le \sum_{i=1}^n \left\{\sqrt[{4}]{\left\langle { {f^{4} \left( {\left| A_{ii} \right|} \right)} x,x} \right\rangle- \left\langle{\left| {{f^2 \left( {\left|  A_{ii} \right|} \right)} - \left\langle{ {f^2 \left( {\left|  A_{ii} \right|} \right)} x,x} \right\rangle 1_{\mathcal{H}} } \right|^2x,x} \right\rangle}\right.
	\\
	&\qquad\left.\times \sqrt[{4}]{\left\langle { {g^{4} \left( {\left|  A_{ii}^* \right|} \right)} y,y} \right\rangle- \left\langle{\left| {{g^2 \left( {\left|  A_{ii}^* \right|} \right)} - \left\langle { {g^2 \left( {\left|  A_{ii}^* \right|} \right)} y,y} \right\rangle 1_{\mathcal{H}} } \right|^2y,y} \right\rangle} \cdot \left\| {x_i } \right\|^2\right\}
	\\
	&\qquad+ \sum\limits_{j \ne i}^n {\left\| {A_{ij} } \right\|\left\| {x_i } \right\|\left\| {x_j } \right\|} \qquad \qquad\qquad({\rm{by\,\, \eqref{eq3.8}\,\, with}}\, S=1_{\mathscr{H}}) \nonumber
	\\
	&\le  \frac{1}{4} \sum\limits_{i = 1}^n \left \{ {\left\| {f^4 \left( {\left| {A_{ii} } \right|} \right) + g^4 \left( {\left| {A_{ii}^* } \right|} \right)} \right\| } \right\}
	\\
	&\qquad\left.   - \left\| {\left| {f^2\left( {\left| {A_{ii} }
				\right|} \right) - \left\| {f^2\left( {\left| {A_{ii} } \right|}
				\right)} \right\|} \right|^2 + \left| {g^2\left( {\left| {A_{ii}^*
				} \right|} \right) - \left\| {g^2\left( {\left| {A_{ii}^* }
					\right|} \right)} \right\|} \right|^2} \right\| \right\}\left\|
	{x_i } \right\|^2
	\nonumber\\
	&\qquad+ \sum\limits_{j \ne i}^n {\left\| {A_{ij} } \right\|\left\| {x_i } \right\|\left\| {x_j } \right\|} \qquad\qquad\qquad ({\rm{by\,GM-AM\, inequality}}) \nonumber
	\\
	&\le  \sum\limits_{i = 1}^n \left\{ \frac{1}{8}\left( {\left\| {f^4 \left( {\left| {A_{ii} } \right|} \right)} \right\| + \left\| {g^4 \left( {\left| {A_{ii}^* } \right|} \right)} \right\|} \right. \right.
	\\
	&\qquad \left. { + \sqrt {\left( {\left\| {f^4 \left( {\left| {A_{ii} } \right|} \right)} \right\| - \left\| {g^4 \left( {\left| {A_{ii}^* } \right|} \right)} \right\|} \right)^2  + 4\left\| {f^2 \left( {\left| {A_{ii} } \right|} \right)g^2 \left( {\left| {A_{ii}^* } \right|} \right)} \right\|^{2} } } \right)
	\\
	&\qquad \left.{ - \frac{1}{4}\left\| {\left| {f^2\left( {\left| {A_{ii} } \right|} \right) - \left\| {f^2\left( {\left| {A_{ii} } \right|} \right)} \right\|} \right|^2 + \left| {g^2\left( {\left| {A_{ii}^* } \right|} \right) - \left\| {g^2\left( {\left| {A_{ii}^* } \right|} \right)} \right\|} \right|^2} \right\|} \right\}\cdot\left\| {x_i } \right\|^2
	\nonumber\\
	&\qquad+ \sum\limits_{j \ne i}^n {\left\| {A_{ij} } \right\|\left\| {x_i } \right\|\left\| {x_j } \right\|} \nonumber
	\end{align*}
\begin{align*}
	&=\frac{1}{8} \sum\limits_{i = 1}^n     \left( {\left\| {f^4 \left( {\left| {A_{ii} } \right|} \right)} \right\| + \left\| {g^4 \left( {\left| {A_{ii}^* } \right|} \right)} \right\|} \right.
	\\
	&\qquad \qquad\left. { + \sqrt {\left( {\left\| {f^4 \left( {\left| {A_{ii} } \right|} \right)} \right\| - \left\| {g^4 \left( {\left| {A_{ii}^* } \right|} \right)} \right\|} \right)^2  + 4\left\| {f^2 \left( {\left| {A_{ii} } \right|} \right)g^2 \left( {\left| {A_{ii}^* } \right|} \right)} \right\|^{2} } } \right)\cdot\left\| {x_i } \right\|^2
	\\
	&\qquad  - \sum\limits_{i = 1}^n \frac{1}{4}\left\| {\left| {f^2\left( {\left| {A_{ii} } \right|} \right) - \left\| {f^2\left( {\left| {A_{ii} } \right|} \right)} \right\|} \right|^2 + \left| {g^2\left( {\left| {A_{ii}^* } \right|} \right) - \left\| {g^2\left( {\left| {A_{ii}^* } \right|} \right)} \right\|} \right|^2} \right\| \cdot\left\| {x_i } \right\|^2
	\nonumber
	\\
	&\qquad+ \sum\limits_{j \ne i}^n {\left\| {A_{ij} } \right\|\left\| {x_i } \right\|\left\| {x_j } \right\|} \nonumber
	\\
	&= \left\langle {\left[ {h_{ij} } \right]y,y} \right\rangle\nonumber
	\end{align*}
	where $y=\left( {\begin{array}{*{20}c}{\left\| {x_1 } \right\|} & {\left\| {x_2 } \right\|} &  \cdots  & {\left\| {x_n } \right\|}  \\  \end{array}} \right)^T$. Taking the supremum for all $x\in \mathscr{H}$, we get the desired result.
\end{proof}

\begin{corollary}
	\label{cor7} If $\bf{A}=\left[ {\begin{array}{*{20}c}
		{A_{11} } & {A_{12} }  \\
		{A_{21} } & {A_{22} }  \\
		\end{array}} \right]$ in $ \mathscr{B}\left(\mathscr{H}_1\oplus\mathscr{H}_2\right)$ , then
	\begin{align*}
	&w\left(    \left[ {\begin{array}{*{20}c}
		{A_{11} } & {A_{12} }  \\
		{A_{21} } & {A_{22} }  \\
		\end{array}} \right]\right)  \\ &\le \frac{1}{4}\left\{ \left( {\widetilde{D}_{11} - \widetilde{d}_{11}  } \right)+\left( {\widetilde{D}_{22} - \widetilde{d}_{22}  } \right)\right.\\
	&\qquad \left.+ \sqrt {\left( { \left( {\widetilde{D}_{11} -
				\widetilde{d}_{11}  } \right)-\left( {\widetilde{D}_{22} -
				\widetilde{d}_{22}  } \right)} \right)^2  +  \left( \left\|
		{A_{12} } \right\|+\left\| {A_{21} } \right\| \right)^2 } \right\}
	\end{align*}
	where
	\begin{align*}
	\widetilde{h}_{ij}  = \left\{ \begin{array}{l}
	\frac{1}{4}\left( {\widetilde{D}_{ii} - \widetilde{d}_{ii}  } \right),\,\,\,\,\,i = j \\
	\\
	\left\| {A_{ij} } \right\|,\qquad\,\,\,\,\,\,\,\,\,\,i \ne j \\
	\end{array} \right.
	\end{align*}
	such that
	\begin{align*}
	\widetilde{D}_{ii}  &= \frac{1}{2}\left( {\left\| {\left| {A_{ii}
			} \right|^{4\alpha } } \right\| + \left\| {\left| {A_{ii}^* }
			\right|^{4\left( {1 - \alpha } \right)} } \right\| }\right.\\
	&\qquad\left.{+ \sqrt {\left( {\left\| {\left| {A_{ii} }
					\right|^{4\alpha } } \right\| - \left\| {\left| {A_{ii}^* }
					\right|^{4\left( {1 - \alpha } \right)} } \right\|} \right)^2  +
			4\left\| {\left| {A_{ii} } \right|^{2\alpha}  \left| {A_{ii}^* }
				\right|^{2\left( {1 - \alpha } \right)} } \right\|^{2} } } \right)
	\end{align*}
	and
	\begin{align*}
	\widetilde{d}_{ii}  = \left\| {\left| {\left| {A_{ii} }
			\right|^{2\alpha}   - \left\| {\left| {A_{ii} } \right|^{2\alpha}
			} \right\|} \right|^2 + \left| {\left| {A_{ii}^* }
			\right|^{2\left(1 - \alpha\right)}  - \left\| {\left| {A_{ii}^* }
				\right|^{2\left(1 - \alpha\right) } } \right\|} \right|^2}
	\right\|
	\end{align*}
\end{corollary}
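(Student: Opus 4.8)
The plan is to obtain this corollary directly from Theorem \ref{thm4} by taking $n=2$ and the special pair of functions $f(t)=t^{\alpha}$, $g(t)=t^{1-\alpha}$ on $[0,\infty)$ (which satisfy $f(t)g(t)=t$, so Theorem \ref{thm4} applies), and then evaluating the numerical radius of the resulting $2\times 2$ matrix of nonnegative scalars exactly by the device already used to pass from \eqref{eq4.4} to \eqref{eq4.6}.

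\emph{Step 1 (the substitution).} For $f(t)=t^{\alpha}$, $g(t)=t^{1-\alpha}$ one has $f^{2}(|A_{ii}|)=|A_{ii}|^{2\alpha}$, $f^{4}(|A_{ii}|)=|A_{ii}|^{4\alpha}$, $g^{2}(|A_{ii}^{*}|)=|A_{ii}^{*}|^{2(1-\alpha)}$, $g^{4}(|A_{ii}^{*}|)=|A_{ii}^{*}|^{4(1-\alpha)}$ and $f^{2}(|A_{ii}|)g^{2}(|A_{ii}^{*}|)=|A_{ii}|^{2\alpha}|A_{ii}^{*}|^{2(1-\alpha)}$. Feeding these into the definitions of $D_{ii}$ and $d_{ii}$ in Theorem \ref{thm4} turns them into $\widetilde{D}_{ii}$ and $\widetilde{d}_{ii}$, so that the matrix $[h_{ij}]$ of Theorem \ref{thm4} becomes $[\widetilde{h}_{ij}]$, and \eqref{eq4.7} yields $w(\mathbf{A})\le w([\widetilde{h}_{ij}])$.

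\emph{Step 2 (the $2\times 2$ numerical radius).} The entries of $[\widetilde{h}_{ij}]$ are nonnegative real numbers, so, in the same way the corollary containing \eqref{eq4.6} was deduced, $w([\widetilde{h}_{ij}])=\tfrac12\,r(M)$, where $M$ is the symmetric $2\times 2$ matrix obtained from $[\widetilde{h}_{ij}]$ by doubling the diagonal entries and replacing both off-diagonal entries by $\|A_{12}\|+\|A_{21}\|$; explicitly, the diagonal of $M$ is $\tfrac12(\widetilde{D}_{11}-\widetilde{d}_{11})$, $\tfrac12(\widetilde{D}_{22}-\widetilde{d}_{22})$. Computing $r(M)$ as the larger root of its characteristic polynomial and multiplying by $\tfrac12$ produces precisely the right-hand side of the asserted inequality; this is pure $2\times 2$ linear algebra once Step 1 is in place.

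\emph{The main obstacle.} The only point that is not mere bookkeeping is the legitimacy of the explicit $2\times2$ formula in Step 2, which presupposes that the diagonal entries $\tfrac14(\widetilde{D}_{ii}-\widetilde{d}_{ii})$ of $[\widetilde{h}_{ij}]$ are nonnegative; this is also implicitly needed for the statement to be meaningful. Thus the crux is the inequality $\widetilde{d}_{ii}\le\widetilde{D}_{ii}$, which I would extract from the very chain of estimates proving Theorem \ref{thm4}: there $\widetilde{D}_{ii}-\widetilde{d}_{ii}$ appears—through \eqref{fact1}, \eqref{fact2} and \eqref{eq3.8} with $S=1_{\mathscr{H}}$—as an upper bound for the manifestly nonnegative quantity $4\sup_{\|x_i\|=1}\langle|A_{ii}|^{2\alpha}x_i,x_i\rangle^{1/2}\langle|A_{ii}^{*}|^{2(1-\alpha)}x_i,x_i\rangle^{1/2}$, and is therefore $\ge 0$. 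With that in hand, everything else reduces to carrying the substitution of Step 1 through the formulas of Theorem \ref{thm4} and simplifying the $2\times 2$ eigenvalue expression of Step 2.
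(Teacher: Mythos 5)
Your proof is correct and follows essentially the same route as the paper: specialize Theorem \ref{thm4} to $f(t)=t^{\alpha}$, $g(t)=t^{1-\alpha}$, and then evaluate the numerical radius of the resulting entrywise nonnegative $2\times 2$ matrix as one half of the spectral radius of its symmetrization. Your explicit verification that $\widetilde{D}_{ii}-\widetilde{d}_{ii}\ge 0$ (which legitimizes that last formula) addresses a point the paper leaves implicit and is a worthwhile addition.
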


\begin{proof}
	Setting $f\left(t\right)=t^\alpha$ and $g\left(t\right)=t^{1-\alpha}$ in \eqref{eq4.7}, then we get
	\begin{align*}
	&w\left(    \left[ {\begin{array}{*{20}c}
		{A_{11} } & {A_{12} }  \\
		{A_{21} } & {A_{22} }  \\
		\end{array}} \right]\right) \\&\le w
	\left(  \left[ {\begin{array}{*{20}c}
		{\frac{1}{4}\left( {\widetilde{D}_{11} - \widetilde{d}_{11}  } \right) } & {\left\| {A_{12} } \right\| }  \\
		{\left\| {A_{21} } \right\|} & {\frac{1}{4}\left( {\widetilde{D}_{22} - \widetilde{d}_{22}  } \right)}  \\
		\end{array}} \right]\right)
	\\
	&= \frac{1}{2}r     \left(  \left[ {\begin{array}{*{20}c}
		{\frac{1}{2}\left( {\widetilde{D}_{11} - \widetilde{d}_{11}  } \right) } & {\left\| {A_{12} } \right\| +\left\| {A_{21} } \right\|}  \\
		{\left\| {A_{21} } \right\|+\left\| {A_{12} } \right\|} & {\frac{1}{2}\left( {\widetilde{D}_{22} - \widetilde{d}_{22}  } \right)}  \\
		\end{array}} \right]\right)
	\\
	&=  \frac{1}{4}\left\{ \left( {\widetilde{D}_{11} - \widetilde{d}_{11}  } \right)+\left( {\widetilde{D}_{22} - \widetilde{d}_{22}  } \right)\right.\\
	&\qquad\left.+ \sqrt {\left( { \left( {\widetilde{D}_{11} - \widetilde{d}_{11}  } \right)-\left( {\widetilde{D}_{22} - \widetilde{d}_{22}  } \right)} \right)^2  +  \left(  \left\| {A_{12} } \right\|+\left\| {A_{21} } \right\| \right)^2 }   \right\}.
	\end{align*}
\end{proof}

The following results refines the first and the second
inequalities in \eqref{eq1.6}
\begin{corollary}
	\label{cor8}If $\bf{A}=\left[ {\begin{array}{*{20}c}
		{A_{11} } & {A_{12} }  \\
		{A_{21} } & {A_{22} }  \\
		\end{array}} \right]$ in $ \mathscr{B}\left(\mathscr{H}_1\oplus\mathscr{H}_2\right)$ , then
	\begin{align*}
	w\left( \left[ {\begin{array}{*{20}c}
		{A_{11} } & {A_{12} }  \\
		{A_{21} } & {A_{22} }  \\
		\end{array}} \right]\right)   \le \frac{1}{4}\left(  \widetilde{R}_{11} + \widetilde{R}_{22}    + \sqrt { \left( {\widetilde{R}_{11} - \widetilde{R}_{22}  } \right)^2   +  \left(    \left\| {A_{12} } \right\|+\left\| {A_{21} } \right\| \right)^2 }   \right),
	\end{align*}
	
	where
	\begin{align*}
	\widetilde{h}_{ij}  = \left\{ \begin{array}{l}
	R_{ii},\qquad i = j \\
	\\
	\left\| {A_{ij} } \right\|,\qquad i \ne j \\
	\end{array} \right.
	\end{align*}
	such that $R_{ii}=\frac{1}{2}\left\| {A^2_{ii} } \right\|-
	\frac{1}{4} \left\| {\left| {\left| {A_{ii} } \right|   - \left\|
			{ A_{ii}    } \right\|} \right|^2 + \left| {\left| {A_{ii}^* }
			\right|  - \left\| { A_{ii}    } \right\|} \right|^2} \right\|$
\end{corollary}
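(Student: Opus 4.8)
The plan is to obtain Corollary~\ref{cor8} essentially as the $\alpha=\tfrac{1}{2}$ instance of Corollary~\ref{cor7} (equivalently, the case $f(t)=g(t)=\sqrt{t}$ of Theorem~\ref{thm4} with $n=2$), after simplifying the diagonal data and then evaluating the numerical radius of the resulting $2\times 2$ nonnegative matrix.

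First I would apply Theorem~\ref{thm4} with $n=2$ and $f(t)=g(t)=\sqrt{t}$, so that $f^{2}(|A_{ii}|)=|A_{ii}|$, $g^{2}(|A_{ii}^{*}|)=|A_{ii}^{*}|$, $f^{4}(|A_{ii}|)=|A_{ii}|^{2}$, $g^{4}(|A_{ii}^{*}|)=|A_{ii}^{*}|^{2}$, and $\|f^{2}(|A_{ii}|)\|=\|A_{ii}\|=\|g^{2}(|A_{ii}^{*}|)\|$. This yields $w(\mathbf{A})\le w([h_{ij}])$ with $h_{ij}=\|A_{ij}\|$ for $i\ne j$ and $h_{ii}=\tfrac{1}{4}(D_{ii}-d_{ii})$. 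Then I would simplify the diagonal: since $\|\,|A_{ii}|^{2}\|=\|A_{ii}^{*}A_{ii}\|=\|A_{ii}\|^{2}=\|A_{ii}A_{ii}^{*}\|=\|\,|A_{ii}^{*}|^{2}\|$, the first two terms under the radical in $D_{ii}$ coincide and their difference disappears, while $d_{ii}$ collapses to $\|\,|\,|A_{ii}|-\|A_{ii}\|\,|^{2}+|\,|A_{ii}^{*}|-\|A_{ii}\|\,|^{2}\,\|$, which is exactly the quantity subtracted in $R_{ii}$.

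The only non-routine ingredient is the cross term $\|\,|A_{ii}|\,|A_{ii}^{*}|\,\|$ that survives in $D_{ii}$, and I claim it equals $\|A_{ii}^{2}\|$. I would derive this from the polar decomposition $A_{ii}=U_{ii}|A_{ii}|=|A_{ii}^{*}|U_{ii}$ with $U_{ii}$ a partial isometry satisfying $U_{ii}^{*}U_{ii}|A_{ii}|=|A_{ii}|$, $|A_{ii}^{*}|U_{ii}U_{ii}^{*}=|A_{ii}^{*}|$ and $|A_{ii}^{*}|=U_{ii}|A_{ii}|U_{ii}^{*}$: then $A_{ii}^{2}=U_{ii}(|A_{ii}|\,|A_{ii}^{*}|)U_{ii}$ gives $\|A_{ii}^{2}\|\le\|\,|A_{ii}|\,|A_{ii}^{*}|\,\|$, and, after absorbing the range projections, $U_{ii}^{*}A_{ii}^{2}U_{ii}^{*}=|A_{ii}|\,|A_{ii}^{*}|$ gives the reverse inequality. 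Feeding this identity into $D_{ii}$ collapses it to an expression in $\|A_{ii}\|^{2}$ and $\|A_{ii}^{2}\|$ alone, and collecting constants identifies $h_{ii}$ with $R_{ii}$ (here $\widetilde R_{ii}=R_{ii}$). If one prefers to avoid the identity, the inequality $\|\,|A_{ii}|\,|A_{ii}^{*}|\,\|\le\|A_{ii}^{2}\|$, or even the cruder bound coming from \eqref{fact2}, still suffices to produce a valid upper estimate for $h_{ii}$.

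It then remains to evaluate $w([h_{ij}])$, a $2\times 2$ matrix with nonnegative real entries $R_{11},R_{22}$ on the diagonal and $\|A_{12}\|,\|A_{21}\|$ off it. Exactly as in the proof of Corollary~\ref{cor7}, replacing the off-diagonal pair by $(\tfrac{1}{2}(\|A_{12}\|+\|A_{21}\|),\tfrac{1}{2}(\|A_{12}\|+\|A_{21}\|))$ does not change the numerical radius, so the computation reduces to that for a real symmetric $2\times 2$ matrix, whose numerical radius equals its spectral radius; the elementary $2\times 2$ spectral-radius formula, handled precisely as in Corollary~\ref{cor7}, then yields the closed form in the statement. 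The principal obstacle in the argument is the norm identity $\|\,|A|\,|A^{*}|\,\|=\|A^{2}\|$ used to simplify $D_{ii}$; once that is in place, everything else is direct substitution into Theorem~\ref{thm4} together with the $2\times 2$ numerical-radius computation already performed in Corollary~\ref{cor7}.
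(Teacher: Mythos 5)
Your route is the paper's route: the paper's entire proof is the single line ``set $\alpha=\tfrac{1}{2}$ in Corollary~\ref{cor7}'', and you carry out that specialization while supplying two ingredients the paper leaves implicit, namely the norm identity $\|\,|A|\,|A^{*}|\,\|=\|A^{2}\|$ (your polar-decomposition argument for it is correct) and the reduction of $w([h_{ij}])$ to a $2\times2$ spectral-radius computation. The genuine gap is the step you pass over with ``collecting constants identifies $h_{ii}$ with $R_{ii}$.'' Doing the substitution honestly: $\|\,|A_{ii}|^{2}\|=\|\,|A_{ii}^{*}|^{2}\|=\|A_{ii}\|^{2}$, so the square root in $\widetilde D_{ii}$ collapses to $2\|\,|A_{ii}|\,|A_{ii}^{*}|\,\|=2\|A_{ii}^{2}\|$, and the diagonal entry delivered by Corollary~\ref{cor7} is $\tfrac{1}{4}\bigl(\|A_{ii}\|^{2}+\|A_{ii}^{2}\|\bigr)-\tfrac{1}{4}\widetilde d_{ii}$, whereas the statement requires $R_{ii}=\tfrac{1}{2}\|A_{ii}^{2}\|-\tfrac{1}{4}\widetilde d_{ii}$. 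Since $\|A_{ii}^{2}\|\le\|A_{ii}\|^{2}$, the quantity you actually obtain dominates $R_{ii}$, in general strictly, so the asserted (smaller) bound does not follow; the constants collect only when $\|A_{ii}^{2}\|=\|A_{ii}\|^{2}$. Your own identity makes the discrepancy visible rather than curing it.

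Moreover the gap cannot be patched, because the corollary as stated is false. Take $A_{11}=\left[\begin{smallmatrix}0&1\\0&0\end{smallmatrix}\right]$ and $A_{12}=A_{21}=A_{22}=0$. Then $|A_{11}|=\diag(0,1)$, $|A_{11}^{*}|=\diag(1,0)$, $\|A_{11}\|=1$, $A_{11}^{2}=0$, so $\bigl|\,|A_{11}|-\|A_{11}\|\,\bigr|^{2}+\bigl|\,|A_{11}^{*}|-\|A_{11}\|\,\bigr|^{2}=I$, giving $R_{11}=-\tfrac{1}{4}$ and $R_{22}=0$; the right-hand side of the corollary is $\tfrac{1}{4}\bigl(-\tfrac{1}{4}+0+\tfrac{1}{4}\bigr)=0$, while $w(A)=w(A_{11})=\tfrac{1}{2}$. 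The same example already defeats Theorem~\ref{thm4} with $n=1$ and $f=g=\sqrt{t}$: its proof replaces the subtracted term $-\langle Cx,x\rangle$ by $-\|C\|$, which bounds it from below, not from above (contrast the correct use of $\ell(\cdot)$ in Theorem~\ref{thm2}). So the defect you inherit is in the source, not only in your write-up: a correct statement along these lines must keep the diagonal entries of the form $\tfrac{1}{4}\bigl(\|A_{ii}\|^{2}+\|A_{ii}^{2}\|\bigr)$ minus an infimum-type correction, rather than $R_{ii}$.
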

\begin{proof}
	Setting $\alpha=\frac{1}{2}$ in Corollary \ref{cor7}.
\end{proof}
Clearly, the obtained bounds in  Corollary \ref{cor8} are better
than  the first and the second bounds in \eqref{eq1.6}.

\centerline{}

\centerline{}

\end{document}